\newcommand{\dsp}{\displaystyle}
\newcommand{\eps}{\varepsilon}
\newcommand{\om}{\omega}
\newcommand{\Om}{\Omega}
\newcommand{\mrm}[1]{\mathrm{#1}}
\newcommand{\Cplx}{\mathbb{C}}
\newcommand{\N}{\mathbb{N}}
\newcommand{\R}{\mathbb{R}}
\newcommand{\mL}{\mrm{L}}
\newcommand{\mH}{\mrm{H}}
\newcommand{\mA}{\mathscr{A}}
\newcommand{\mX}{\mrm{X}}
\newcommand{\loc}{\mbox{\scriptsize loc}}
\newcommand{\rcoef}{\mathcal{R}}
\newcommand{\tcoef}{\mathcal{T}} 
\newcommand{\rN}{r}
\newcommand{\rD}{R}
\newtheorem{proposition}{Proposition}[section]
\begin{document}

~\vspace{0.0cm}
\begin{center}
{\sc \bf\LARGE  Invisibility and perfect reflectivity in}\\[6pt]
{\sc \bf\LARGE  waveguides with finite length branches}
\end{center}

\begin{center}
\textsc{Lucas Chesnel}$^1$, \textsc{Sergei A. Nazarov}$^{2,\,3,\,4}$, \textsc{Vincent Pagneux}$^5$\\[16pt]
\begin{minipage}{0.96\textwidth}
{\small
$^1$ INRIA/Centre de mathématiques appliquées, \'Ecole Polytechnique, Université Paris-Saclay, Route de Saclay, 91128 Palaiseau, France;\\
$^2$ St. Petersburg State University, Universitetskaya naberezhnaya, 7-9, 199034, St. Petersburg, Russia;\\
$^3$ Peter the Great St. Petersburg Polytechnic University, Polytekhnicheskaya ul, 29, 195251, St. Petersburg, Russia;\\
$^4$ Institute of Problems of Mechanical Engineering, Bolshoy prospekt, 61, 199178, V.O., St. Petersburg, Russia;\\
$^5$ Laboratoire d'Acoustique de l'Université du Maine, Av. Olivier Messiaen, 72085 Le Mans, France.\\[10pt]
E-mails: \texttt{lucas.chesnel@inria.fr}, \texttt{srgnazarov@yahoo.co.uk}, \texttt{vincent.pagneux@univ-lemans.fr}\\[-14pt]
\begin{center}
(\today)
\end{center}
}
\end{minipage}
\end{center}
\vspace{0.4cm}

\noindent\textbf{Abstract.} 
We consider a time-harmonic wave problem, appearing for example in water-waves theory, in acoustics or in electromagnetism, in a setting such that the analysis reduces to the study of a 2D waveguide problem with a Neumann boundary condition. The geometry is symmetric with respect to an axis orthogonal to the direction of propagation of waves. Moreover, the waveguide contains one branch of finite length. We analyse the behaviour of the complex scattering coefficients $\rcoef$, $\tcoef$ as the length of the branch increases and we show how to design geometries where non reflectivity ($\rcoef=0$, $|\tcoef|=1$), perfect reflectivity ($|\rcoef|=1$, $\tcoef=0$) or perfect invisibility ($\rcoef=0$, $\tcoef=1$) hold. Numerical experiments illustrate the different results.\\

\noindent\textbf{Key words.} Waveguides, invisibility, non reflectivity, perfect reflectivity, scattering matrix, asymptotic analysis. 

\section{Introduction}\label{Introduction}

Invisibility is an exciting topic in scattering theory. In the present article, we consider a time-harmonic waves problem in a 2D waveguide unbounded in one direction (say $(Ox)$) with a non-penetration (Neumann) boundary condition. This problem appears naturally in acoustics, in water-waves theory (for straight vertical walls and horizontal bottom) or in electromagnetism. In the waveguide geometry, at a given frequency, only a finite number of waves can propagate along the $(Ox)$ axis. More precisely, the quantity of interest (the acoustic pressure, the velocity potential, ...) decomposes at $\pm\infty$ as the sum of a finite number of propagating waves plus an infinite number of exponentially decaying modes. All through the paper, we will assume that the frequency is small enough so that only one wave (the piston wave) can propagate in the 2D waveguide. To describe the scattering process of the incident piston wave coming from $-\infty$, classically one introduces two complex coefficients, namely the \textit{reflection} and \textit{transmission} coefficients, denoted $\rcoef$ and $\tcoef$, such that $\rcoef$ (resp. $\tcoef$) corresponds to the amplitude of the scattered field at $-\infty$ (resp. $+\infty$). According to the energy conservation, we have
\begin{equation}\label{EnergyConservation}
|\rcoef|^2+|\tcoef|^2=1.
\end{equation} 
In this work, we are interested in geometries where non reflectivity ($\rcoef=0$), perfect reflectivity ($\tcoef=0$) or perfect invisibility ($\tcoef=1$) occurs. Of course, due to the conservation of energy (\ref{EnergyConservation}), perfect invisibility implies non reflectivity. The converse is wrong since we can have $|\tcoef|=1$ with $\tcoef\ne1$. In this case, the incident piston wave goes through the waveguide with a phase shift.\\
\newline 
In this setting, examples of situations where quasi invisibility ($|\rcoef|$ small or $|\tcoef-1|$ small) happens, obtained via numerical simulations, exist in literature. We refer the reader to \cite{PoNe14,EvMP14} for water wave problems and to \cite{AlSE08,EASE09,NgCH10,OuMP13,FuXC14} for strategies based on the use of new ``zero-index'' and ``epsilon near zero'' metamaterials in electromagnetism (see \cite{FlAl13} for an application to acoustic). Let us mention also that the problem of the existence of quasi invisible obstacles for frequencies close to the threshold frequency has been addressed in the analysis of the so-called Weinstein anomalies \cite{Vain66} (see e.g. \cite{na580,KoNS16}).\\
\newline
As for the rigorous proof of existence of geometries where $\rcoef=0$ or $\tcoef=1$, literature is not very developed especially if we compare to what is available concerning the existence of \textit{trapped modes} (see e.g. \cite{Urse51,Evan92,EvLV94,DaPa98,LiMc07,Naza10c,NaVi10}). We remind the reader that trapped modes are non zero solutions to the homogeneous problem (\ref{PbInitial}) which are exponentially decaying both at $\pm\infty$. Using a similar terminology, we can call \textit{invisible modes} the solutions of (\ref{PbInitial}) such that the scattered field is exponentially decaying both at $\pm\infty$ ($\tcoef=1$). Such a difference of treatment between trapped and invisible modes in literature is striking since the two notions seem to share similarities.\\
\newline
One way to find situations where $\rcoef=0$ or $\tcoef=0$ (but not $\tcoef=1$) is to use the so-called Fano resonance (see the seminal paper \cite{Fano61}). Let us present briefly the idea which is developed and justified under some assumptions that can be verified numerically in \cite{ShVe05,ShTu12,ShWe13,AbSh16} in the context of gratings in electromagnetism. If for a given wavenumber $k_0$ there is a trapped mode, then perturbing slightly the geometry allows one to exhibit settings where the scattering coefficients have a fast variation for $k$ moving on the real axis around $k_0$. And with additional geometric assumptions, one can show that $|\rcoef|$, $|\tcoef|$ pass through $0$ and $1$. For waveguides problems, we refer the reader to \cite{ChNaSu}.\\
\newline
Another approach to construct waveguides such that $\rcoef=0$ has been proposed in \cite{BoNa13,BLMN15} (see also \cite{BoNTSu,BoCNSu,ChHS15,ChNa16} for applications to other problems).  The method  consists in adapting the proof of the implicit functions theorem. More precisely, the idea is to observe that $\rcoef=0$ in the straight waveguide and then to make a well-chosen smooth perturbation of amplitude $\eps$ (small) in the boundary to keep $\rcoef=0$. As explained in \cite{BoNa13}, this strategy does not permit to impose $\tcoef=1$ (perfect invisibility) for waveguides with Neumann boundary conditions because the differential of $\tcoef$ with respect to the deformation for the reference geometry is not onto in $\Cplx$ (think to the assumptions of the implicit functions theorem). However, this problem was overcome in \cite{BoCN18} where it is shown how to get $\tcoef=1$ (and not only $|\tcoef|=1$) working with singular perturbations (instead of smooth ones) made of thin rectangles. Let us mention that these types of techniques proposed in \cite{Naza11,Naza13} were used in \cite{Naza11c,Naza12,CaNR12,Naza11b} in a similar context. In these works, the authors construct small (non necessarily symmetric) perturbations of the walls of a waveguide that preserve the presence of a trapped mode associated with an eigenvalue embedded in the continuous spectrum.\\
\newline
It is important to emphasize that the methods of the previous paragraph are perturbative methods. They require to start from a geometry where it is known that $\rcoef=0$ or/and $\tcoef=1$. In our case, this geometry is simply the reference (straight) waveguide. As a consequence, the technique cannot be used to construct waveguides where $\tcoef=0$ (perfect reflectivity). In this article, we propose to investigate another route allowing us to get $\rcoef=0$, $\tcoef=0$ and also $\tcoef=1$. It relies on two main ingredients: symmetries and asymptotic analysis for truncated waveguides. Interestingly, our approach provides examples of geometries where $\rcoef=0$ or $\tcoef=1$ which are not small perturbations of the reference waveguide. In our study, we will be led to consider scattering problems in $\sf{T}$-shaped waveguides. Such problems have been considered in particular in \cite{Naza10a,NaSh11}. Let us mention also that this work shares connections with \cite{CaLo08,Mois09,BuSa11,HeKN12}. In the latter papers, the authors investigate the presence of trapped modes (also called bound states) associated with eigenvalues embedded in the continuous spectrum in geometries similar to ours. Finally, note that in the present article, we deal only with the Neumann boundary conditions. However, Dirichlet waveguides can be treated similarly and analogous results would be obtained. We emphasize that our approach is exact in the sense that we do not neglect exponentially decaying modes (this simplifying assumption appears very often in physics literature). \\
\newline
The paper is structured as follows. We begin by introducing the setting and notation in Section \ref{SectionSetting}. The waveguide $\Om_L$ is symmetric with respect to the $(Oy)$ axis (perpendicular to the unbounded direction) and contains one vertical (along the $(Oy)$ axis) branch of finite length $L-1$. Using the symmetry, we decompose the problem into two sub-problems set in half-waveguides with different boundary conditions: one with Neumann boundary conditions, another with mixed (Dirichlet and Neumann) boundary conditions. Then, we compute an asymptotic expansion of the scattering coefficients $\rcoef$, $\tcoef$ as $L\to+\infty$ (the branch of finite length becomes longer and longer). This expansion depends on the number of propagating modes existing in the vertical branch of the unbounded $\sf{T}$-shaped waveguide $\Om_{\infty}$ obtained at the limit $L=+\infty$, and this number itself depends on the width $\ell$ of the vertical branch of $\Om_{\infty}$. In Section \ref{SectionAsymptotic}, we focus our attention on small values of $\ell$ for which only one propagating mode exists in the vertical branch of $\Om_{\infty}$. In Section \ref{SectionNonReflection}, we use the asymptotic expansions of the scattering coefficients to prove the existence of geometries where one has $\rcoef=0$ (non reflectivity) or $\tcoef=0$ (perfect reflectivity). In Section \ref{SectionTwoModes}, we consider a larger value for the parameter $\ell$ such that two modes can propagate in the vertical branch of $\Om_{\infty}$. In such cases, we show that the behaviour of the scattering coefficients as $L\to+\infty$ can be quite complex. In Section \ref{SectionCompleteInvisibility}, we explain how to construct waveguides where there holds $\tcoef=1$ (perfect invisibility). In Section \ref{SectionNumerics} we provide numerical experiments illustrating the different results obtained in the paper. Finally, in Section \ref{sectionConclusion}, we give a brief conclusion and in the Appendix we gather the proof of two statements used in the analysis. The main results of this article are Proposition \ref{ThinBranch} (non reflectivity and perfect reflectivity for a thin vertical branch), Proposition \ref{WideBranch} (non reflectivity and perfect reflectivity for a larger vertical branch) and the approach presented in Section \ref{SectionCompleteInvisibility} to obtain perfect invisibility.

\section{Setting}\label{SectionSetting}

\begin{figure}[!ht]
\centering
\begin{tikzpicture}[scale=1.5]
\draw[fill=gray!30,draw=none](-2,1) rectangle (2,2);
\draw[fill=gray!30,draw=none](-5,1) rectangle (-2,2);
\draw[fill=gray!30,draw=none](-2,1.8) rectangle (-1,3.8);
\draw (-2,1)--(2,1); 
\draw (-5,2)--(-2,2)--(-2,3.8)--(-1,3.8)--(-1,2)--(2,2); 
\draw (-5,1)--(-2,1); 
\draw[dashed] (3,1)--(2,1); 
\draw[dashed] (3,2)--(2,2);
\draw[dashed] (-6,2)--(-5,2);
\draw[dashed] (-6,1)--(-5,1);
\draw[->] (3,1.2)--(3.6,1.2);
\draw[->] (3.1,1.1)--(3.1,1.7);
\draw[dotted,>-<] (-2,4)--(-1,4);
\draw[dotted,>-<] (-0.8,1.95)--(-0.8,3.85);
\node at (3.65,1.3){\small $x$};
\node at (3.25,1.6){\small $y$};
\node at (-1.5,4.2){\small $\ell$};
\node at (-0.4,2.9){\small $L-1$};
\draw[dashed,line width=0.5mm,gray!80] (-1.5,1)--(-1.5,3.8);
\draw[fill=white] (-0.6,1.5) ellipse (0.4 and 0.3);
\draw[fill=white] (-2.4,1.5) ellipse (0.4 and 0.3);
\begin{scope}[xshift=-4.6cm,yshift=1.7cm,scale=0.8]
\draw[line width=0.2mm,->] plot[domain=0:pi/4,samples=100] (\x,{0.2*sin(20*\x r)}) node[anchor=west] {\hspace{-2.4cm}$1$};
\end{scope}
\begin{scope}[xshift=-4.6cm,yshift=1.3cm,scale=0.8]
\draw[line width=0.2mm,<-] plot[domain=0:pi/4,samples=100] (\x,{0.2*sin(20*\x r)}) node[anchor=west] {$\hspace{-2.4cm}\mathcal{R}$};
\end{scope}
\begin{scope}[xshift=1cm,yshift=1.5cm,scale=0.8]
\draw[line width=0.2mm,->] plot[domain=0:pi/4,samples=100] (\x,{0.2*sin(20*\x r)}) node[anchor=west] {$\mathcal{T}$};
\end{scope}

\end{tikzpicture}
\caption{Example of geometry $\Om_L$. The vertical thick dashed line marks the axis of symmetry of the domain which will play a key role in the analysis. \label{DomainOriginal2D}} 
\end{figure}

For $\ell>0$, $L>1$, consider a connected open set $\Om_L\subset\R^2$ (see Figure \ref{DomainOriginal2D}) which coincides with the region 
\[
\{ (x,y)\in\R\times(0;1)\ \cup\  (-\ell/2;\ell/2)\times [1;L)\}
\]
outside a given ball centered at $0$ of radius $d>0$ (independent of $\ell$, $L$). We assume that $\Om_L$ is symmetric with respect to the $(Oy)$ axis ($\Om_L=\{(-x,y)\,|\,(x,y)\in\Om_L\}$) and that its boundary $\partial\Om_L$ is Lipschitz. We work in a rather academic geometry but other settings can be considered as well (see Figures \ref{figResultAngleDroit1}, \ref{figResultAngleDroit2} and the discussion in Section \ref{sectionConclusion}). We assume that the propagation of time-harmonic waves in $\Om_L$ is governed by the Helmholtz equation with Neumann boundary conditions
\begin{equation}\label{PbInitial}
\begin{array}{|rcll}
\Delta v + k^2 v & = & 0 & \mbox{ in }\Om_L\\[3pt]
 \partial_nv  & = & 0  & \mbox{ on }\partial\Om_L. 
\end{array}
\end{equation}
In this problem, $\Delta$ denotes the 2D Laplace operator, $k$ is the wavenumber and $n$ stands for the normal unit vector to $\partial\Om_L$ directed to the exterior of $\Om_L$. Moreover, $v$ corresponds for example to the velocity potential in water-waves theory or to the pressure in acoustics. We assume that $k\in(0;\pi)$ so that $k^2$ is located between the first and second thresholds of the continuous spectrum $\sigma_c=[0;+\infty)$ of Problem (\ref{PbInitial}) and we set 
\[
w^{\pm}(x,y)=e^{\pm ik x}/\sqrt{2k}.
\]
In the following, $w^{\pm}$ will serve to define the incident and scattered fields. Introduce $\chi^{+}\in\mathscr{C}^{\infty}(\R^2)$ (resp. $\chi^{-}\in\mathscr{C}^{\infty}(\R^2)$) a cut-off function that is equal to one for $x\ge 2\max(\ell,d)$ (resp. $x\le -2\max(\ell,d)$) and to zero for $x\le \max(\ell,d)$ (resp. $x\ge -\max(\ell,d)$). The scattering process of the incident piston wave coming from $-\infty$ by the structure is described by the problem 
\begin{equation}\label{PbChampTotal}
\begin{array}{|rcll}
\multicolumn{4}{|l}{\mbox{Find }v\in\mrm{H}^1_{\loc}(\Om_L) \mbox{ such that }v-\chi^-w^+\mbox{ is outgoing and } }\\[3pt]
\Delta v +k^2 v & = & 0 & \mbox{ in }\Om_L\\[3pt]
 \partial_nv  & = & 0  & \mbox{ on }\partial\Om_L. 
\end{array}
\end{equation}
Here, $v-\chi^-w^+$ is outgoing means that there holds the decomposition (see the schematic picture \ref{DomainOriginal2D})
\begin{equation}\label{defZetaL}
v-\chi^-w^+ = \chi^{-}\rcoef\,w^-+\chi^{+}\tcoef\,w^++\tilde{v}
\end{equation}
with $\tilde{v}\in\mrm{H}^1(\Om_L)$ which is exponentially decaying at $\pm\infty$. One can prove that Problem (\ref{PbChampTotal}) always admits a solution (see e.g. \cite[Chap. 5, \S3.3, Thm. 3.5 p.160]{NaPl94}) which is possibly non uniquely defined if there is a trapped mode\footnote{We remind the reader that we call ``trapped mode'' a solution to Problem (\ref{PbInitial}) which belongs to $\mH^1(\Om_L)$ (see \cite{LiMc07} for more details).} at the wavenumber $k$. However, the reflection coefficient $\rcoef\in\Cplx$ and transmission coefficient $\tcoef\in\Cplx$ are always uniquely defined. They satisfy the energy conservation relation
\[
|\rcoef|^2+|\tcoef|^2=1
\]
already written in (\ref{EnergyConservation}). Of course $\rcoef$ and $\tcoef$ depend on the features of the geometry, in particular on $L$. In this work, we explain how to find some $L$ such that $\rcoef=0$, $|\tcoef|=1$ (non reflectivity); $|\rcoef|=1$, $\tcoef=0$ (perfect reflectivity); or $\rcoef=0$, $\tcoef=1$ (perfect invisibility). To obtain such particular values for the scattering coefficients, we will use the fact that the geometry is symmetric with respect to the $(Oy)$ axis. Define the half-waveguide 
\[
\om_L:=\{(x,y)\in\Om_L\,|\,x<0\}.
\]
(see Figure \ref{LimitDomain}, left). Introduce the problem with Neumann boundary conditions 
\begin{equation}\label{PbChampTotalSym}
\begin{array}{|rcll}
\Delta u +k^2 u & = & 0 & \mbox{ in }\om_L\\[3pt]
 \partial_nu  & = & 0  & \mbox{ on }\partial\om_L
\end{array}
\end{equation}
as well as the problem with mixed boundary conditions 
\begin{equation}\label{PbChampTotalAntiSym}
\begin{array}{|rcll}
\Delta U + k^2 U & = & 0 & \mbox{ in }\om_L\\[3pt]
 \partial_nU  & = & 0  & \mbox{ on }\partial\om_L\cap\partial\Om_L \\[3pt]
U  & = & 0  & \mbox{ on }\Sigma_L:=\partial\om_L\setminus\partial\Om_L.
\end{array}
\end{equation}
Problems (\ref{PbChampTotalSym}) and (\ref{PbChampTotalAntiSym}) admit respectively the solutions 
\begin{equation}\label{defZetaLsym}
u = \chi^-(w^++\rN\,w^-) + \tilde{u},\qquad\mbox{ with }\tilde{u}\in\mH^1(\om_L),
\end{equation} 
\begin{equation}\label{defZetaLanti}
U = \chi^-(w^++\rD\,w^-) + \tilde{U},\qquad\mbox{ with }\tilde{U}\in\mH^1(\om_L),
\end{equation} 
where $\rN$, $\rD\in\Cplx$ are uniquely defined. Moreover, due to conservation of energy, one has
\begin{equation}\label{NRJHalfguide}
|\rN|=|\rD|=1.
\end{equation}
Briefly, let us explain how to show the latter identities. First, integrating by parts, one obtains
\begin{equation}\label{integral}
\int_{x=-\xi} (\partial_nu\,\overline{u}- u\,\partial_n\overline{u})\,dy=0
\end{equation}
for $\xi>0$ large enough. Here, we denote $\partial_n=-\partial_x$ at $x=-\xi$. Observing that the integral (\ref{integral}) does not depend on $\xi$, taking the limit $\xi\to+\infty$ and using the explicit representation (\ref{defZetaLsym}), we get $|\rN|=1$. Working analogously with $U$ and exploiting (\ref{defZetaLanti}) leads to $|\rD|=1$.\\
\newline
Now, direct inspection shows that if $v$ is a solution of Problem (\ref{PbChampTotal}) then, we have $v(x,y)=(u(x,y)+U(x,y))/2$ in $\om_L$ and $v(x,y)=(u(-x,y)-U(-x,y))/2$ in $\Om_L\setminus\overline{\om_L}$ (up possibly to a term which is exponentially decaying at $\pm\infty$ if there is a trapped mode at the given wavenumber $k$). We deduce that the scattering coefficients $\rcoef$, $\tcoef$ appearing in the decomposition (\ref{defZetaL}) of $v$ are such that
\begin{equation}\label{Formulas}
\rcoef=\frac{\rN+\rD}{2}\qquad\mbox{ and }\qquad \tcoef=\frac{\rN-\rD}{2}.
\end{equation}
Imagine that we want to have $\rcoef=0$ (non reflectivity). According to (\ref{Formulas}), we must impose $\rN=-\rD$. Relations (\ref{NRJHalfguide}) guarantee that for all $L>1$, both $\rN$ and $\rD$ are located on the unit circle $\mathbb{S}:=\{z\in\Cplx\,|\,|z|=1\}$. In the following, we will show that for $\ell$, the width of the vertical branch of $\Om_L$, smaller than $\pi/k$, $\rD$ tends to a constant $R_{\infty}\in\mathbb{S}$ while $\rN$ runs continuously along $\mathbb{S}$ as $L\to+\infty$. This will prove the existence of $L$ such that $\rN=-\rD$ and so $\rcoef=0$. This will also show that there is some $L$ such that $\rN=\rD$ and, therefore, $\tcoef=0$ (perfect reflectivity). In order to obtain perfect invisibility, that is $\tcoef=1$, we must impose both $\rN=1$ and $\rD=-1$. In other words, there is an additional constrain to satisfy and we will need to play with another degree of freedom. Here, we do not explain how to proceed, this will be the concern of Section \ref{SectionCompleteInvisibility}. The important outcome of this discussion is that we will study the behaviour of $\rN$ and $\rD$ with respect to $L$ going to $+\infty$. As one can imagine, this behaviour depends on the properties of the equivalents of Problems (\ref{PbChampTotalSym}), (\ref{PbChampTotalAntiSym}) set in the limit geometry $\om_{\infty}$ obtained from $\om_L$ making $L\to+\infty$ 
(see Figure \ref{LimitDomain}, right). More precisely, the number of propagating waves existing in the vertical branch of $\om_{\infty}$ will play a key role in the analysis. 

\begin{figure}[!ht]
\centering
\begin{tikzpicture}[scale=1.2]
\draw[fill=gray!30,draw=none](-4,0) rectangle (0,1);
\draw[fill=gray!30,draw=none](-0.4,0) rectangle (0,2.5);
\draw (-4,1)--(-0.4,1)--(-0.4,2.52);
\draw[line width=1mm, dotted] (-0.4,2.5)--(0.04,2.5);
\draw (0.05,0)--(-4,0);
\draw[line width=1mm] (0,0)--(0,2.5);
\draw[dashed] (-4.5,1)--(-4,1); 
\draw[dashed] (-4.5,0)--(-4,0); 
\node at (0.3,1.2){\small $\Sigma_{L}$};
\node at (-2.4,0.2){\small $\om_{L}$};
\draw[dotted,>-<] (-0.5,3.1)--(0.1,3.1);
\draw[dotted,>-<] (0.7,-0.1)--(0.7,2.6);
\node at (0.9,1.2){\small $L$};
\node at (-0.15,2.7){\small $\Gamma_{L}$};
\node at (-0.2,3.3){\small $\ell/2$};
\phantom{\draw[dashed] (0,3.5)--(0,3); }
\draw[fill=white] (-0.9,0.5) ellipse (0.4 and 0.3);
\begin{scope}[xshift=-4cm,yshift=0.7cm,scale=0.8]
\draw[line width=0.2mm,->] plot[domain=0:pi/4,samples=100] (\x,{0.2*sin(20*\x r)}) node[anchor=west] {\hspace{-2.6cm}$1$};
\end{scope}
\begin{scope}[xshift=-4cm,yshift=0.3cm,scale=0.8]
\draw[line width=0.2mm,<-] plot[domain=0:pi/4,samples=100] (\x,{0.2*sin(20*\x r)}) node[anchor=west] {$\hspace{-2.6cm}r/R$};
\end{scope}
\end{tikzpicture}\qquad\qquad\begin{tikzpicture}[scale=1.2]
\draw[fill=gray!30,draw=none](-4,0) rectangle (0,1);
\draw[fill=gray!30,draw=none](-0.4,0) rectangle (0,3);
\draw (-4,0)--(0.05,0); 
\draw[line width=1mm] (0,0)--(0,3);
\draw (-4,1)--(-0.4,1)--(-0.4,3);
\draw[dashed] (-4.5,1)--(-4,1); 
\draw[dashed] (-4.5,0)--(-4,0); 
\draw[dashed] (0,3.5)--(0,3); 
\draw[dashed] (-0.4,3.5)--(-0.4,3); 
\node at (0.3,1.6){\small $\Sigma_{\infty}$};
\node at (-3.4,0.2){\small $\om_{\infty}$};
\phantom{\draw[dotted,>-<] (0.3,-0.1)--(0.3,2.6);}
\draw[fill=white] (-0.9,0.5) ellipse (0.4 and 0.3);
\end{tikzpicture}
\caption{Domains $\om_{L}$ (left) and $\om_{\infty}$ (right).\label{LimitDomain}} 
\end{figure}

\section{Asymptotic expansion of the scattering coefficients as $L\to+\infty$}\label{SectionAsymptotic}

\subsection{Half-waveguide problem with mixed boundary conditions}\label{paragraphMixed}
Consider the problem obtained from (\ref{PbChampTotalAntiSym}) making formally $L\to+\infty$:
\begin{equation}\label{PbUnbounded}
\begin{array}{|rcll}
\Delta U_{\infty} + k^2 U_{\infty}  & = & 0 & \mbox{ in }\om_{\infty}\\[3pt]
 \partial_n U_{\infty}   & = & 0  & \mbox{ on }\partial\om_{\infty}\cap\partial\Om_{\infty}\\[3pt]
U_{\infty}  & = & 0  & \mbox{ on }\Sigma_{\infty}:=\partial\om_{\infty}\setminus\partial\Om_{\infty}.
\end{array}
\end{equation}
Here $\Om_{\infty}$ is the domain obtained from $\Om_L$ with $L\to+\infty$. When $\ell\in(0;\pi/k)$, propagating modes in the vertical branch of $\om_{\infty}$ for Problem (\ref{PbUnbounded}) do not exist and we can show that (\ref{PbUnbounded}) admits the solution 
\begin{equation}\label{ScatteringDemiMixed}
U_{\infty} = \chi^-(w^++R_{\infty}\,w^-) + \tilde{U}_\infty,\qquad\mbox{ with }\tilde{U}_\infty\in\mH^1(\om_\infty),
\end{equation}
where $R_{\infty}$, such that $|R_{\infty}|=1$, (work as in (\ref{NRJHalfguide}) to establish this identity), is uniquely defined. In Proposition \ref{PropoAsymptotic} in Appendix, when Problem (\ref{PbUnbounded}) admits only the zero solution in $\mH^1(\om_{\infty})$ (absence of trapped modes), we explain how to prove the expansion 
\[
U = U_{\infty}+ \dots\,
\]
(see the precise statement in (\ref{VolumicEstimate})). Here and in what follows, the dots correspond to a remainder which is exponentially small as $L\to+\infty$. Hence, we deduce 
\begin{equation}\label{Resultat0mode}
\rD = R_{\infty}+ \dots\,.
\end{equation}
More precisely, we can show that $|\rD - R_{\infty} | \le C\,e^{-\sqrt{(\pi/\ell)^2-k^2} L}$ where $C$ is independent of $L$ (estimate (\ref{MainEstimationCoef}) in Proposition \ref{PropoAsymptotic} in Appendix).

\subsection{Half-waveguide problem with Neumann boundary conditions}\label{paragraphNeumann}

Making $L\to+\infty$ in (\ref{PbChampTotalSym}) leads to the problem 
\begin{equation}\label{PbUnboundedBis}
\begin{array}{|rcll}
\Delta u_{\infty} + k^2 u_{\infty} & = & 0 & \mbox{ in }\om_{\infty}\\[3pt]
 \partial_nu_{\infty}  & = & 0  & \mbox{ on }\partial\om_{\infty}.
\end{array}
\end{equation} 
When $\ell\in(0;2\pi/k)$, one propagating mode exists in the vertical branch of $\om_{\infty}$ for (\ref{PbUnboundedBis}). Set 
\[
w_{\circ}^{\pm}(x,y)=e^{\pm i k y}/\sqrt{k\ell}.
\]
Problem (\ref{PbUnboundedBis}) admits the solutions
\begin{equation}\label{decompoUnbounded}
\begin{array}{lcl}
u^-_{\infty} & = & \chi^-(w^++r_{\infty}\,w^-)+ \chi^{\circ}\,t_{\infty}\,w^+_{\circ} + \tilde{u}^-_\infty,\\[3pt]
u^\circ_{\infty} & = & \chi^-\,t^{\circ}_{\infty}\,w^- +\chi^{\circ}(w^-_\circ+r^{\circ}_{\infty}\,w^+_\circ) + \tilde{u}^\circ_\infty,
\end{array}
\end{equation}
where $\tilde{u}^-_\infty$, $\tilde{u}^{\circ}_\infty$ are functions in $\mH^1(\om_\infty)$ and where $\chi^{\circ}\in\mathscr{C}^{\infty}(\R^2)$ is such that $\chi^{\circ}=0$ for $y\le 1$, $\chi^{\circ}=1$ for $y\ge 1+\delta$ ($\delta>0$ is a constant). Note that $u^\circ_{\infty}$ is the total field corresponding to an incident wave of unit amplitude which travels in the negative $y-$direction. We define the scattering matrix
\begin{equation}\label{UnboundedScatteringMatrix}
\mathfrak{s}_{\infty}:=\left(\begin{array}{cc}
r_{\infty} & t_{\infty} \\
t_{\infty}^{\circ} & r_{\infty}^{\circ} 
\end{array}\right).
\end{equation}
It is known that $\mathfrak{s}_{\infty}$ is unitary ($\mathfrak{s}_{\infty}\,\overline{\mathfrak{s}_{\infty}}^{\top}=\mrm{Id_{2\times2}}$) and symmetric ($\mathfrak{s}_{\infty}=\mathfrak{s}_{\infty}^{\top}$). For the convenience of the reader, we recall the proof of this result in the Appendix (see Proposition \ref{ProofUnitary}). To obtain an asymptotic expansion of $\rN$ as $L$ goes to $+\infty$, let us compute an asymptotic expansion of $u$. For $u$, we make the ansatz \cite[Chap. 5, \S5.6]{MaNP00}
\begin{equation}\label{Ansatz}
u = u_{\infty}^- + a(L)\,u_{\infty}^{\circ}+\dots
\end{equation}
where $a(L)$ is a gauge function, depending on $L$ but not on $(x,y)$, which has to be determined. On the segment $\Gamma_L:=(-\ell/2;0)\times\{L\}$, we find 
\[
\begin{array}{lcl}
\partial_n u(x,L) & = & \partial_nu_{\infty}^-(x,L) + a(L)\,\partial_nu_{\infty}^{\circ}(x,L)+\dots \\[3pt]
&=& ik\,(k\ell)^{-1/2}\,(\,t_{\infty}\,e^{ik L}+a(L)\,(-e^{- ik L}+r^{\circ}_{\infty}\,e^{ik L})\,)+\dots \ .
\end{array}
\]
Since $\partial_nu=0$ on $\Gamma_L$, we take 
\begin{equation}\label{gaugeFunction}
a(L)=\frac{-t_{\infty}}{-e^{- 2ik L}+r^{\circ}_{\infty}}.
\end{equation}
In order $a(L)$ to be defined for all $L>1$, we must have $|r^{\circ}_{\infty}|\ne1$. Since $\mathfrak{s}_{\infty}$ is unitary and symmetric, this is equivalent to have $t_{\infty}=t^{\circ}_{\infty}\ne0$. If $t_{\infty}=0$, we can choose $a(L)=0$ and prove that $\rcoef = r_{\infty}+\dots$. When $t_{\infty}\ne0$ (so that there is some transmission of energy between the two leads of $\om_{\infty}$ for Problem (\ref{PbUnboundedBis})), plugging expression (\ref{gaugeFunction}) in (\ref{Ansatz}) and identifying the main contribution of the terms of each side of the equality as $x\to-\infty$, we get 
\begin{equation}\label{formulaAsymptotics}
\rN = r_{\mrm{asy}}(L)+\dots\qquad\mbox{ with }\ r_{\mrm{asy}}(L):=r_{\infty}-\dsp\frac{(t^{\circ}_{\infty})^2}{-e^{-2ikL}+r^{\circ}_{\infty}}.
\end{equation}
In (\ref{formulaAsymptotics}), the subscript ``$_{\mrm{asy}}$'' stands for ``asymptotic'' (and not ``asymmetric''). Note that the rigorous demonstration of (\ref{formulaAsymptotics}), which requires to assume that Problem (\ref{PbUnboundedBis}) admits only the zero solution in $\mH^1(\om_{\infty})$ (absence of trapped modes), follows the lines of the proof of Proposition \ref{PropoAsymptotic} in Appendix. However a bit more work is needed to establish a stability estimate corresponding to (\ref{StabilityEstimate}) in this case. To proceed, it is necessary to work  with techniques of weighted Sobolev spaces with detached asymptotics. For
more details, we refer the reader to \cite{Naza13}. As $L$ tends to $+\infty$, the term $r_{\mrm{asy}}(L)$ runs along the set
\begin{equation}\label{setSPlusMoins}
\{r_{\infty}-\dsp\frac{(t^{\circ}_{\infty})^2}{z+r^{\circ}_{\infty}} \,|\,z\in\mathbb{S}\}\qquad\mbox{with}\quad\mathbb{S}=\{z\in\Cplx\,|\,|z|=1\}.
\end{equation}
Using classical results concerning the M\"{o}bius transform, one finds that this set coincides with the circle centered at 
\begin{equation}\label{eqnCenters}
r_{\infty}+\dsp\frac{(t^{\circ}_{\infty})^2\,\overline{r^{\circ}_{\infty}}}{1-|r^{\circ}_{\infty}|^{2}}
\end{equation}
of radius 
\begin{equation}\label{eqnRadius}
\dsp\frac{|t^{\circ}_{\infty}|^2}{1-|r^{\circ}_{\infty}|^{2}}.
\end{equation}
Since $\mathfrak{s}_{\infty}$ is unitary, we have $1=|t^{\circ}_{\infty}|^{2}+|r^{\circ}_{\infty}|^{2}$ and $r^{\circ}_{\infty}=-t^{\circ}_{\infty}\overline{r_{\infty}}/\overline{t^{\circ}_{\infty}}$. From these relations, one can prove that the set defined in (\ref{setSPlusMoins}) is nothing else but the unit circle $\mathbb{S}$.\\
\newline
Since the dots in (\ref{formulaAsymptotics}) correspond to terms which are exponentially decaying as $L$ tends to $+\infty$, we infer that the coefficient $\rN $ does not converge when $L\to+\infty$. Instead, asymptotically as $L\to+\infty$, it behaves like $r_{\mrm{asy}}(L)$, \textit{i.e.} it runs almost periodically along the unit circle $\mathbb{S}$. Since $|\rN|=1$ for all $L\ge 1$, we deduce that $\rN$ also runs (almost periodically) along $\mathbb{S}$ as $L\to+\infty$. The period, which is equal to $\pi/k$, tends to $+\infty$ when $k\to0$. 

\subsection{Original problem}\label{paragraphInitialPb}
From Formula (\ref{Formulas}), we know that the coefficients $\rcoef$, $\tcoef$ appearing in the decomposition (\ref{defZetaL}) of a solution to Problem (\ref{PbChampTotal}) set in $\Om_L$ satisfy $\rcoef=(\rN+\rD)/2$ and $\tcoef=(\rN-\rD)/2$. From the results of \S\ref{paragraphMixed} and \S\ref{paragraphNeumann}, we deduce that when $\ell\in(0;\pi/k)$, we have 
\begin{equation}\label{CircleFirstCase}
\begin{array}{lcl}
\rcoef = \rcoef_{\mrm{asy}}(L)+\dots\qquad\quad &\mbox{ with }\qquad \rcoef_{\mrm{asy}}(L)=(r_{\mrm{asy}}(L) + R_{\infty})/2,\\[3pt]
\tcoef = \tcoef_{\mrm{asy}}(L)+\dots\qquad\quad &\mbox{ with }\qquad \tcoef_{\mrm{asy}}(L)=(r_{\mrm{asy}}(L) - R_{\infty})/2.
\end{array}
\end{equation}
Here $r_{\mrm{asy}}(L)$ is defined in (\ref{formulaAsymptotics}). This shows that asymptotically, $\rcoef$ (resp. $\tcoef$) runs along a circle of radius $1/2$ centered at $R_{\infty}/2$ (resp. $-R_{\infty}/2$).

\section{Non reflectivity and perfect reflectivity for thin vertical branches}\label{SectionNonReflection}
Now we explain how to use the results of the previous section to show the existence of geometries where there holds $\rcoef=0$ (non reflectivity) or $\tcoef=0$ (perfect reflectivity) for the given frequency $k\in(0;\pi)$. We work in the geometry $\Om_L$ introduced in Section \ref{SectionSetting} with $\ell\in(0;\pi/k)$ ($\ell$ is the width of the vertical branch, see Figure \ref{DomainOriginal2D}).
\begin{proposition}\label{ThinBranch}
Assume that the coefficient $t_{\infty}$ appearing in (\ref{decompoUnbounded}) satisfies $t_{\infty}\ne0$. Assume that both Problems (\ref{PbUnbounded}) and (\ref{PbUnboundedBis}) admits only the zero solution in $\mH^1(\om_{\infty})$ (absence of trapped modes for Problems (\ref{PbUnbounded}) and (\ref{PbUnboundedBis})). Then the following statements are valid:\\[5pt]
$i)$ (\textsc{non reflectivity}) There is an infinite sequence of values $1<L_1 <\dots <L_N <\dots$ such that for $L=L_n$, there holds $\rcoef=0$. Moreover, we have $\lim_{n\to+\infty}L_{n+1}-L_n=\pi/k$.\\[5pt]
$ii)$ (\textsc{perfect reflectivity}) There is an infinite sequence of values $1<\boldsymbol{L}_1 <\dots <\boldsymbol{L}_N <\dots$ such that for $L=\boldsymbol{L}_n$, there holds $\tcoef=0$. Moreover, we have $\lim_{n\to+\infty}\boldsymbol{L}_{n+1}-\boldsymbol{L}_n=\pi/k$.
\end{proposition}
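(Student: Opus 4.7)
The plan is to reformulate both statements as questions about when a single $\mathbb{S}$-valued function of $L$ hits the points $\pm 1$, and then settle that by winding number / intermediate value theorem. Set $g(L):=\rN(L)\,\overline{\rD(L)}$. Since $|\rN|=|\rD|=1$ by \eqref{NRJHalfguide}, we have $g(L)\in\mathbb{S}$, and (\ref{Formulas}) combined with $|\rD|=1$ gives the equivalences
\[
\rcoef(L)=0\ \Longleftrightarrow\ g(L)=-1,\qquad \tcoef(L)=0\ \Longleftrightarrow\ g(L)=+1.
\]
So proving $i)$ and $ii)$ reduces to producing two infinite sequences of $L$ at which $g$ equals $-1$ and $+1$ respectively, with gaps tending to $\pi/k$.

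The next step is to gather the asymptotic information. Under the two hypotheses (no trapped modes for (\ref{PbUnbounded}) or (\ref{PbUnboundedBis})), \S\ref{paragraphMixed} and \S\ref{paragraphNeumann} yield, for some $\mu>0$,
\[
\rD(L)=R_{\infty}+O(e^{-\mu L}),\qquad \rN(L)=r_{\mrm{asy}}(L)+O(e^{-\mu L}),
\]
with $r_{\mrm{asy}}(L)=r_{\infty}-(t^{\circ}_{\infty})^2/(-e^{-2ikL}+r^{\circ}_{\infty})$. The assumption $t_{\infty}\ne 0$, together with the unitarity and symmetry of $\mathfrak{s}_{\infty}$, guarantees $|r^{\circ}_{\infty}|<1$, so $r_{\mrm{asy}}$ is a smooth function of $L$. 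The Möbius computation between (\ref{setSPlusMoins}) and (\ref{eqnRadius}) shows that the image of $L\mapsto r_{\mrm{asy}}(L)$ is exactly $\mathbb{S}$; since $L\mapsto e^{-2ikL}$ is $\pi/k$-periodic and describes $\mathbb{S}$ once per period, $r_{\mrm{asy}}$ is a $\pi/k$-periodic bijective parametrization of $\mathbb{S}$. Therefore $g_{\mrm{asy}}(L):=r_{\mrm{asy}}(L)\overline{R_{\infty}}$ is a continuous $\pi/k$-periodic map $\R\to\mathbb{S}$ with winding number $\pm 1$ per period, and
\[
g(L)=g_{\mrm{asy}}(L)+O(e^{-\mu L}).
\]

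To finish, I would choose a continuous lift $\theta_{\mrm{asy}}:\R\to\R$ of $g_{\mrm{asy}}$ satisfying $\theta_{\mrm{asy}}(L+\pi/k)-\theta_{\mrm{asy}}(L)=\pm 2\pi$, and lift $g$ continuously (possible since $g$ is $\mathbb{S}$-valued, hence nonvanishing) into a function $\theta(L)$ with $\theta(L)=\theta_{\mrm{asy}}(L)+O(e^{-\mu L})$. Then on every period interval $[L_{0},L_{0}+\pi/k]$ with $L_{0}$ sufficiently large, $\theta$ sweeps a range of length arbitrarily close to $2\pi$, so by the intermediate value theorem it attains every residue mod $2\pi$; in particular it hits $\pi\pmod{2\pi}$ (giving $g=-1$, i.e.\ $\rcoef=0$) and $0\pmod{2\pi}$ (giving $g=+1$, i.e.\ $\tcoef=0$). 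Concatenating these periods yields the two sequences $(L_{n})$ and $(\boldsymbol L_{n})$, and the exponential control $\theta-\theta_{\mrm{asy}}=O(e^{-\mu L})$ together with the exact periodicity of $\theta_{\mrm{asy}}$ forces $L_{n+1}-L_{n}\to\pi/k$ and $\boldsymbol L_{n+1}-\boldsymbol L_{n}\to\pi/k$.

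The main technical obstacle is not the winding argument itself but justifying that $\rN(\cdot)$ and $\rD(\cdot)$ depend continuously on $L$. Both scattering coefficients are always uniquely defined, but at isolated values of $L$ the problems (\ref{PbChampTotalSym})--(\ref{PbChampTotalAntiSym}) may admit trapped modes, and continuity across those exceptional values must be established. This can be handled by an analytic Fredholm argument showing that $\rN$ and $\rD$ are real-analytic functions of $L$ away from a discrete set of values and extend continuously across them (alternatively, one can apply the asymptotic expansion of Proposition \ref{PropoAsymptotic} only for $L$ large and avoid any small interval around a trapping $L$; this is enough since we only need existence along a tail $L\ge L_{\star}$). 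Once this is in hand, the rest of the argument is a routine combination of the Möbius-circle geometry already prepared in \S\ref{paragraphNeumann} with the intermediate value theorem.
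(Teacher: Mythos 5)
Your proposal is correct and follows essentially the same route as the paper's proof: reduce to the conditions $\rN=-\rD$ and $\rN=\rD$ via (\ref{Formulas}), invoke the asymptotics $\rD=R_{\infty}+\dots$ and $\rN=r_{\mrm{asy}}(L)+\dots$ from Section \ref{SectionAsymptotic}, and conclude with the intermediate value theorem. Your repackaging into $g=\rN\overline{\rD}$ with an explicit continuous lift, and your remark that continuity of $L\mapsto\rN,\rD$ across possible trapped-mode values deserves justification, simply make rigorous the step the paper states as ``$\rN$ runs continuously (and almost periodically) along $\mathbb{S}$''.
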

\begin{proof}
We know that $\rcoef=(\rN+\rD)/2$ and $\tcoef=(\rN-\rD)/2$ (Formula (\ref{Formulas})). Moreover, for all $L\ge 1$, $\rN$ and $\rD$ are located on the unit circle $\mathbb{S}$ (\ref{NRJHalfguide}). The results of the previous section show that, as $L\to+\infty$, $\rD$ tends to a constant while $\rN$ runs continuously (and almost periodically) along $\mathbb{S}$ (here we use the assumptions of the proposition). From the intermediate value theorem, we deduce that there is an infinite sequence of values $1<L_1 <\dots <L_N <\dots$ such that for $L=L_n$, we have $r=-R$ and, therefore, $\rcoef=0$. This provides examples of geometries where there holds non reflectivity. As $n\to+\infty$, we have
\[
L_{n+1}-L_n=\pi/k+\dots,
\]
where the dots denote exponentially small terms. The proof of statement $ii)$ is similar. 
\end{proof}
\section{Non reflectivity and perfect reflectivity for larger vertical branches}\label{SectionTwoModes}
In the previous section, we explained how to exhibit geometries $\Om_L$ where we have $\rcoef=0$ or $\tcoef=0$ when $\ell\in(0;\pi/k)$ (we remind the reader that $\ell$ is the width of the vertical branch). Now we study the same question following the same approach when $\ell\in(\pi/k;2\pi/k)$.\\ 
\newline
When $\ell\in(\pi/k;2\pi/k)$, the main change compare to what has been done in Sections \ref{SectionAsymptotic}, \ref{SectionNonReflection} is that one propagating mode exists in the vertical branch of $\om_{\infty}$ for Problem (\ref{PbUnbounded}) with mixed boundary conditions. Set 
\[
w_{\bullet}^{\pm}(x,y)=(\alpha\ell/2)^{-1/2}e^{\pm i\alpha y}\sin(\pi x/\ell)\qquad \mbox{ with }\qquad\alpha:=\sqrt{k^2-(\pi/\ell)^2}.
\]
Problem (\ref{PbUnbounded}) admits the solutions
\begin{equation}\label{decompoUnboundedTwo}
\begin{array}{lcl}
U^-_{\infty} & = & \chi^-(w^++R_{\infty}\,w^-)+ \chi^{\circ}\,T_{\infty}\,w^+_{\bullet} + \tilde{U}^-_\infty,\\[3pt]
U^\bullet_{\infty} & = & \chi^-\,T^{\bullet}_{\infty}\,w^- +\chi^{\circ}(w^-_\bullet+R^{\bullet}_{\infty}\,w^+_\bullet) + \tilde{U}^\bullet_\infty,
\end{array}
\end{equation}
where $\tilde{U}^-_\infty$, $\tilde{U}^{\bullet}_\infty$ are functions in $\mH^1(\om_\infty)$. The scattering matrix
\begin{equation}\label{UnboundedScatteringMatrixBis}
\mathbb{S}_{\infty}:=\left(\begin{array}{cc}
R_{\infty} & T_{\infty} \\
T_{\infty}^{\bullet} & R_{\infty}^{\bullet} 
\end{array}\right)
\end{equation}
is unitary and symmetric (the proof is the same as the one of Proposition \ref{ProofUnitary} in Appendix). To obtain an asymptotic expansion of $\rD$ as $L$ goes to $+\infty$, we work exactly as in \S\ref{paragraphNeumann} where we derived an expansion for $\rN$. First, we compute an asymptotic expansion of $U$. For $U$, we make the ansatz 
\begin{equation}\label{AnsatzTwo}
U = U_{\infty}^- + A(L)\,U_{\infty}^{\bullet}+\dots
\end{equation}
where $A(L)$ is a gauge function, depending on $L$ but not on $(x,y)$, which has to be determined. On the segment $\Gamma_L=(-\ell/2;0)\times\{L\}$, we find 
\[
\begin{array}{lcl}
\partial_nU(x,L) & = & \partial_nU_{\infty}^-(x,L) + A(L)\,\partial_nU_{\infty}^{\bullet}(x,L)+\dots \\[3pt]
&=& i\alpha\,(\alpha\ell/2)^{-1/2}\,(\,T_{\infty}\,e^{i\alpha L}+A(L)\,(-e^{- i\alpha L}+R^{\bullet}_{\infty} \,e^{i\alpha L})\,)+\dots \ .
\end{array}
\]
Since $\partial_nU=0$ on $\Gamma_L$, we take 
\begin{equation}\label{gaugeFunctionTwo}
A(L)=\frac{-T_{\infty}}{-e^{- 2i\alpha L}+R^{\bullet}_{\infty}}.
\end{equation}
In order $A(L)$ to be defined for all $L>1$, we must have $|R^{\bullet}_{\infty}|\ne1$. Since $\mathbb{S}_{\infty}$ is unitary and symmetric, this is equivalent to have $T_{\infty}=T^{\bullet}_{\infty}\ne0$. When $T_{\infty}=0$, we can choose $A(L)=0$ and prove that $\rD = R_{\infty}+\dots$. When $T_{\infty}\ne0$, plugging expression (\ref{gaugeFunctionTwo}) in (\ref{AnsatzTwo}) and identifying the main contribution of the terms of each side of the equality as $x\to-\infty$ yields
\begin{equation}\label{formulaAsymptoticsTwo}
\rD = R_{\mrm{asy}}(L)+\dots\qquad\mbox{ with }\ R_{\mrm{asy}}(L):=R_{\infty}-\dsp\frac{(T^{\bullet}_{\infty})^2}{-e^{-2i\alpha L}+R^{\bullet}_{\infty}}.
\end{equation}
Working as in (\ref{eqnCenters})--(\ref{eqnRadius}), we can prove that the term $R_{\mrm{asy}}(L)$ runs along the unit circle as $L$ tends to $+\infty$.\\
\newline 
Coupling these results with the ones obtained in \S\ref{paragraphNeumann}, we deduce that when $\ell\in(\pi/k;2\pi/k)$, the scattering coefficients for Problem (\ref{PbChampTotal}) set in $\Om_L$ admit the asymptotic expansion
\begin{equation}\label{AsymExpanTwo}
\begin{array}{lll}
\rcoef = \rcoef_{\mrm{asy}}(L)+\dots\qquad\quad &\mbox{ with }&\qquad \rcoef_{\mrm{asy}}(L)=(r_{\mrm{asy}}(L) + R_{\mrm{asy}}(L))/2,\\[3pt]
\tcoef = \tcoef_{\mrm{asy}}(L)+\dots\qquad\quad &\mbox{ with }&\qquad\tcoef_{\mrm{asy}}(L)=(r_{\mrm{asy}}(L) - R_{\mrm{asy}}(L))/2.
\end{array}
\end{equation}
Here $r_{\mrm{asy}}(L)$, $R_{\mrm{asy}}(L)$ are respectively defined in (\ref{formulaAsymptotics}), (\ref{formulaAsymptoticsTwo}). In \S\ref{paragraphInitialPb}, where $\ell\in(0;\pi/k)$ so that  only one propagating mode exists in the vertical branch of $\Om_\infty$, we gave an explicit characterization of the sets $\{\rcoef_{\mrm{asy}}(L)\,|\,L\in(1;+\infty)\}$ and $\{\tcoef_{\mrm{asy}}(L)\,|\,L\in(1;+\infty)\}$. More precisely, we showed that they coincide with circles of radius $1/2$ passing through zero. In the present situation, this seems much less simple and numerical experiments in \S\ref{numerics2modes} show that the behaviour of $\rcoef_{\mrm{asy}}(L)$, $\tcoef_{\mrm{asy}}(L)$ when $L\to+\infty$ can be quite complicated. Let us just consider cases where there are $m,n\in\N^{\ast}:=\{1,2,\dots\}$, with $m>n$, such that 
\begin{equation}\label{RelationRatio}
k=\alpha\,\cfrac{m}{n} \qquad\Leftrightarrow \qquad \ell=\frac{\pi}{ k}\,\frac{m}{\sqrt{m^2-n^2}}\,.
\end{equation}
This boils down to assume that the width of the vertical branch $\ell$ is such that $k/\alpha$ is a rational number. Define $z=e^{-2i\alpha L/n}$. As $L\to+\infty$, $\rcoef_{\mrm{asy}}(L)$, $\tcoef_{\mrm{asy}}(L)$ run respectively along the sets 
\[
\begin{array}{lcl}
\mathcal{S}_{\rcoef}&:=&\Big\{\ \cfrac{1}{2}\,\Big(r_{\infty}-\dsp\frac{(t^{\circ}_{\infty})^2}{-z^m+r^{\circ}_{\infty}}\Big)- \frac{1}{2}\Big(R_{\infty}-\dsp\frac{(T^{\bullet}_{\infty})^2}{-z^n+R^{\bullet}_{\infty}}\Big)\,|\,z\in\mathbb{S}\ \Big\},\\[10pt]
\mathcal{S}_{\tcoef}&:=&\Big\{\ \cfrac{1}{2}\,\Big(r_{\infty}-\dsp\frac{(t^{\circ}_{\infty})^2}{-z^m+r^{\circ}_{\infty}}\Big)+ \frac{1}{2}\Big(R_{\infty}-\dsp\frac{(T^{\bullet}_{\infty})^2}{-z^n+R^{\bullet}_{\infty}}\Big)\,|\,z\in\mathbb{S}\ \Big\}.
\end{array}
\]
In other words, $\rcoef_{\mrm{asy}}(L)$, $\tcoef_{\mrm{asy}}(L)$ run $n\pi/\alpha$-periodically along the close curves $\mathcal{S}_{\rcoef}$, $\mathcal{S}_{\tcoef}$ in the complex plane. Moreover, for any $L_{\star}>1$, for $L\in[L_{\star};L_{\star}+n\pi/\alpha]$, $L\mapsto r_{\mrm{asy}}(L)$ (resp. $L\mapsto R_{\mrm{asy}}(L)$) runs continuously $m$ times (resp. $n$ times) along $\mathbb{S}$. Therefore, according to the intermediate value theorem, we know that there exist at least $m-n$ values of $L\in [L_{\star};L_{\star}+n\pi/\alpha]$ such that $R_{\mrm{asy}}(L)=r_{\mrm{asy}}(L)$ and $m-n$ other values of $L\in [L_{\star};L_{\star}+n\pi/\alpha]$ such that $R_{\mrm{asy}}(L)=-r_{\mrm{asy}}(L)$. Since $\rcoef=(r_{\mrm{asy}}(L)+R_{\mrm{asy}}(L))/2+\dots$, and $\tcoef=(r_{\mrm{asy}}(L)-R_{\mrm{asy}}(L))/2+\dots$, we infer that there are some constants $\alpha(L_{\star})\le0$ and $\beta(L_{\star})\ge0$ (exponentially small with respect to $L_{\star}$) such that $L\mapsto \rcoef$ and $L\mapsto \tcoef$ vanish at least $m-n$ times in $[L_{\star}+\alpha(L_{\star});L_{\star}+n\pi/\alpha+\beta(L_{\star})]$. This provides examples of geometries where we have non reflectivity or perfect reflectivity with $\ell\in(\pi/k;2\pi/k)$.\\
\newline
When $k/\alpha$ is not a rational number, since $r_{\mrm{asy}}(L)$ runs faster than $R_{\mrm{asy}}(L)$ along the unit disk  (because $k>\alpha$), we can still conclude that there are some $L$ such that $\rcoef=0$ or $\tcoef=0$. However, in general the values of $L$ such that  $\rcoef=0$ or $\tcoef=0$ do not form an (approximately) periodic sequence. We summarize these results in the following proposition.
\begin{proposition}\label{WideBranch}
Assume that one of the coefficients $t_{\infty}$, $T_{\infty}$ appearing in (\ref{decompoUnbounded}), (\ref{decompoUnboundedTwo}) satisfies $t_{\infty}\ne0$ or $T_{\infty}\ne0$. Assume that both Problems (\ref{PbUnbounded}) and (\ref{PbUnboundedBis}) admits only the zero solution in $\mH^1(\om_{\infty})$ (absence of trapped modes for Problems (\ref{PbUnbounded}) and (\ref{PbUnboundedBis})). Then the following statements are valid:\\[5pt]
$i)$ (\textsc{non reflectivity}) There is an infinite sequence of values $1<L_1 <\dots <L_N <\dots$ such that for $L=L_n$, there holds $\rcoef=0$.\\[5pt]
$ii)$ (\textsc{perfect reflectivity}) There is an infinite sequence of values $1<\boldsymbol{L}_1 <\dots <\boldsymbol{L}_N <\dots$ such that for $L=\boldsymbol{L}_n$, we have $\tcoef=0$.
\end{proposition}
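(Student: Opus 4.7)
The plan is to follow the strategy of Proposition~\ref{ThinBranch}, but using the asymptotic expansion (\ref{AsymExpanTwo}) in place of (\ref{CircleFirstCase}). The new feature is that $\rD$ is no longer asymptotically constant: when $T_{\infty}\ne 0$, the coefficient $R_{\mrm{asy}}(L)$ itself runs periodically along $\mathbb{S}$, with period $\pi/\alpha$, while $r_{\mrm{asy}}(L)$ runs with period $\pi/k$. Since $\alpha=\sqrt{k^2-(\pi/\ell)^2}<k$, the two motions proceed at different speeds, and this will be exploited to produce the required crossings.

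First, I would split into three cases according to which of $t_{\infty}$, $T_{\infty}$ vanish. If $T_{\infty}=0$, then by the analog of (\ref{Resultat0mode}) applied to the two-mode mixed problem, $\rD=R_{\infty}+\dots$ is asymptotically a fixed point of $\mathbb{S}$, while by hypothesis $t_{\infty}\ne 0$ so $\rN$ runs along $\mathbb{S}$ exactly as in \S\ref{paragraphNeumann}. The argument of Proposition~\ref{ThinBranch} then applies verbatim. The case $t_{\infty}=0$, $T_{\infty}\ne 0$ is symmetric, with the roles of $\rN$ and $\rD$ exchanged (here $\rN=r_{\infty}+\dots$ is constant and $\rD$ runs along $\mathbb{S}$ with period $\pi/\alpha$).

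In the generic case $t_{\infty}\ne 0$ and $T_{\infty}\ne 0$, write $r_{\mrm{asy}}(L)=e^{i\theta_r(L)}$ and $R_{\mrm{asy}}(L)=e^{i\theta_R(L)}$ with continuous lifts $\theta_r,\theta_R\in C^{1}(\R)$. The M\"obius-transform analysis already performed in (\ref{setSPlusMoins})--(\ref{eqnRadius}) and its analog for $R_{\mrm{asy}}$ show that $r_{\mrm{asy}}$ covers $\mathbb{S}$ once on every interval of length $\pi/k$ and $R_{\mrm{asy}}$ covers it once on every interval of length $\pi/\alpha$; hence $\theta_r(L+\pi/k)-\theta_r(L)=\pm 2\pi$ and $\theta_R(L+\pi/\alpha)-\theta_R(L)=\pm 2\pi$. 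Consequently $\theta_r(L)-\theta_R(L)$ is (up to a bounded, almost periodic remainder) linear in $L$ with nonzero slope $\pm 2(k\pm\alpha)\ne 0$, so it is unbounded and continuous, and by the intermediate value theorem it hits $0$ modulo $2\pi$ (giving $r_{\mrm{asy}}=R_{\mrm{asy}}$, i.e.\ $\tcoef_{\mrm{asy}}=0$) and $\pi$ modulo $2\pi$ (giving $r_{\mrm{asy}}=-R_{\mrm{asy}}$, i.e.\ $\rcoef_{\mrm{asy}}=0$) infinitely often. This produces two infinite increasing sequences $(L_n)$ and $(\boldsymbol{L}_n)$ of asymptotic zeros.

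The last step is to pass from zeros of $\rcoef_{\mrm{asy}}$ and $\tcoef_{\mrm{asy}}$ to zeros of the exact $\rcoef$ and $\tcoef$. The key point is transversality: since $\theta_r-\theta_R$ has derivative bounded away from zero (of order $k-\alpha>0$), each crossing is simple, and the exponentially small remainders in (\ref{AsymExpanTwo}) cannot destroy it. Concretely, near each asymptotic zero, I would apply Rouch\'e's theorem on a small disk of radius that is polynomially small in $L$ but still dominates the exponentially small error; this yields an exact zero of $\rcoef$ (resp.\ $\tcoef$) within that disk. I expect this transversality-and-perturbation step to be the main technical obstacle, because unlike in Proposition~\ref{ThinBranch} the zeros are no longer (even approximately) periodic when $k/\alpha$ is irrational, so one needs quantitative estimates to ensure that the perturbed zeros remain isolated and that no two asymptotic zeros collapse into spurious cancellation.
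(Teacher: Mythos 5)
Your treatment of the asymptotic part follows the paper's route: the case split according to which of $t_{\infty}$, $T_{\infty}$ vanishes, the observation that $r_{\mrm{asy}}$ and $R_{\mrm{asy}}$ wind around $\mathbb{S}$ at the distinct average rates $2k$ and $2\alpha$, and the conclusion that the phase difference $\theta_r-\theta_R$ is unbounded and therefore crosses $0$ and $\pi$ modulo $2\pi$ infinitely often. (The paper phrases this as a counting argument: when $k=\alpha m/n$, over an interval of length $n\pi/\alpha$ the map $r_{\mrm{asy}}$ covers $\mathbb{S}$ $m$ times while $R_{\mrm{asy}}$ covers it $n$ times, giving at least $m-n$ crossings of each type; it only sketches the irrational case, where your lift formulation is if anything cleaner.)

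The genuine gap is in your last step. Rouch\'e's theorem is not available here: $\rcoef$ and $\tcoef$ are functions of the \emph{real} parameter $L$, not holomorphic functions on a complex disk, and for a continuous curve $L\mapsto\tcoef(L)\in\Cplx$ the fact that it passes exponentially close to the origin with a transversal asymptotic crossing does \emph{not} force it to hit the origin --- a curve in the plane can be pushed off a point by an arbitrarily small perturbation. What rescues the argument, and what the paper (like the proof of Proposition \ref{ThinBranch}) relies on, is energy conservation (\ref{NRJHalfguide}): the \emph{exact} coefficients satisfy $|\rN|=|\rD|=1$ for every $L$, so by (\ref{Formulas}) the condition $\rcoef=0$ is equivalent to $\rN=-\rD$, i.e.\ to the single real equation $\theta_N(L)-\theta_D(L)\equiv\pi\ (\mathrm{mod}\ 2\pi)$ for continuous lifts of the exact phases, and $\tcoef=0$ is $\theta_N-\theta_D\equiv 0\ (\mathrm{mod}\ 2\pi)$. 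Since the exact phase difference is exponentially close to the asymptotic one, it too is continuous and unbounded, and the scalar intermediate value theorem produces the infinite sequences $(L_n)$ and $(\boldsymbol{L}_n)$ directly. This reduction also disposes of your worry about non-periodicity in the irrational case, and of the need for a quantitative transversality estimate --- which, incidentally, you could not obtain in the form stated: the instantaneous angular speeds of the M\"obius images $r_{\mrm{asy}}$, $R_{\mrm{asy}}$ are not constant, so $\theta_r'-\theta_R'$ need not be bounded away from zero; only its average over a period is controlled.
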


\section{Perfect invisibility}\label{SectionCompleteInvisibility}
\begin{figure}[!ht]
\centering
\begin{tikzpicture}[scale=1.35]
\draw[fill=gray!30,draw=none](-3,0) rectangle (3,1);
\draw[fill=gray!30,draw=none](-0.4,0) rectangle (0.4,3);
\draw[fill=gray!30,draw=none](-2,0) rectangle (-1.2,2);
\draw[fill=gray!30,draw=none](2,0) rectangle (1.2,2);
\draw (-3,1)--(-2,1)--(-2,2)--(-1.2,2)--(-1.2,1)--(-0.4,1)--(-0.4,3)--(0.4,3)--(0.4,1)--(1.2,1)--(1.2,2)--(2,2)--(2,1)--(3,1);
\draw (3,0)--(-3,0);
\draw[dashed] (-3.5,1)--(-3,1); 
\draw[dashed] (-3.5,0)--(-3,0); 
\draw[dashed] (3.5,1)--(3,1); 
\draw[dashed] (3.5,0)--(3,0); 
\draw[dotted,>-<] (-0.45,2.5)--(0.45,2.5);
\draw[dotted,>-<] (0,-0.05)--(0,3.05);
\draw[dotted,>-<] (-2.05,1.5)--(-1.15,1.5);
\draw[dotted,>-<] (2.05,1.5)--(1.15,1.5);
\draw[dotted,>-<] (-1.6,-0.05)--(-1.6,2.05);
\draw[dotted,>-<] (1.6,-0.05)--(1.6,2.05);
\draw[dotted,<->] (0.05,0.3)--(1.55,0.3);
\draw[dotted,<->] (-0.05,0.3)--(-1.55,0.3);
\node at (0,2.6){\small $\ell$};
\node at (1.6,1.6){\small $1$};
\node at (-1.6,1.6){\small $1$};
\node at (0.15,1.5){\small $L$};
\node at (1.75,1){\small $\gamma$};
\node at (-1.45,1){\small $\gamma$};
\node at (0.8,0.42){\small $\vartheta$};
\node at (-0.8,0.42){\small $\vartheta$};
\draw[fill=white] (-2,0.5) ellipse (0.2 and 0.3);
\draw[fill=white] (2,0.5) ellipse (0.2 and 0.3);
\end{tikzpicture}\qquad\quad \begin{tikzpicture}[scale=1.35]
\draw[fill=gray!30,draw=none](-3,0) rectangle (0,1);
\draw[fill=gray!30,draw=none](-0.4,0) rectangle (0,3);
\draw[fill=gray!30,draw=none](-2,0) rectangle (-1.2,2);
\draw (-3,1)--(-2,1)--(-2,2)--(-1.2,2)--(-1.2,1)--(-0.4,1)--(-0.4,3)--(0,3)--(0,0)--(-3,0);
\draw[line width=1mm] (0,-0.01)--(0,3.01);
\draw[dashed] (-3.5,1)--(-3,1); 
\draw[dashed] (-3.5,0)--(-3,0); 
\draw[dotted,>-<] (-0.45,2.5)--(0.05,2.5);
\draw[dotted,>-<] (-0.2,-0.05)--(-0.2,3.05);
\draw[dotted,>-<] (-2.05,1.5)--(-1.15,1.5);
\draw[dotted,>-<] (-1.6,-0.05)--(-1.6,2.05);
\draw[dotted,<->] (-0.05,0.3)--(-1.55,0.3);
\node at (-0.2,2.6){\small $\ell/2$};
\node at (-1.6,1.6){\small $1$};
\node at (0.15,1.5){\small $L$};
\node at (-1.45,1){\small $\gamma$};
\node at (-0.8,0.42){\small $\vartheta$};
\draw[fill=white] (-2,0.5) ellipse (0.2 and 0.3);
\end{tikzpicture}
\caption{Domains $\Om_{L}^\gamma$ (left) and $\om_{L}^\gamma$ (right).\label{TwoChem}} 
\end{figure}

Up to now, we have explained how to find geometries where $\rcoef=0$ (non reflectivity) or $\tcoef=0$ (perfect reflectivity). In this section, we explain how to get $\tcoef=1$ (perfect invisibility). Since $\tcoef=(\rN-\rD)/2$ (Formula (\ref{Formulas})), we must impose both $\rN=1$ and $\rD=-1$. To proceed, we  work in a new geometry $\Om_{L}^\gamma$ (see Figure \ref{TwoChem}, left) which coincides with the region 
\[
\{ (x,y)\in\R\times(0;1)\ \cup\  (-\ell/2;\ell/2)\times [1;L)\ \cup\ (\pm\vartheta-1/2;\pm\vartheta+1/2)\times [1;\gamma)
\]
outside a bounded domain. Again we assume that $\Om_{L}^\gamma$ is symmetric with respect to the $(Oy)$ axis ($\Om_{L}^\gamma=\{(-x,y)\,|\,(x,y)\in\Om_{L}^\gamma\}$), connected and that its boundary is Lipschitz. Here $\gamma>1$ and $\vartheta>\ell/2+0.5$. The parameter $\vartheta$ is chosen only so that the central branch is distinct from the two others. Let $\om_{L}^\gamma$ refer to the half-waveguide such that  $\om_{L}^\gamma:=\{(x,y)\in\Om_{L}^\gamma\,|\,x<0\}$ (Figure \ref{TwoChem} right). Again, denote $\rcoef$, $\tcoef$ (resp. $\rN$, $\rD$) the scattering coefficients for Problem (\ref{PbChampTotal}) (resp. for Problems (\ref{PbChampTotalSym}), (\ref{PbChampTotalAntiSym})) set in $\Om_L^{\gamma}$ (resp. $\om_L^{\gamma}$).\\
\newline
For $\ell\in(0;\pi/k)$ and a given $\gamma>0$, as explained in \S\ref{paragraphMixed}, $\rD$ tends to a constant $R_{\infty}$ located on the unit circle $\mathbb{S}$. Making $\gamma\to+\infty$, we can prove as in \S\ref{paragraphNeumann} that $R_{\infty}$ runs continuously along $\mathbb{S}$.  This allows one to deduce that there is $\gamma=\gamma_{\infty}$ such that $R_{\infty}=-1$. Then, tuning $\gamma$ into $\gamma(L)$, with $\gamma(L)$ exponentially close to $\gamma_{\infty}$, we can impose $\rD=-1$ for all $L$ sufficiently large. On the other hand, the coefficient $\rN$ runs along $\mathbb{S}$ as $L\to+\infty$. Therefore, almost periodically, we have $\rN=1$ and $\rD=-1$ so that $\tcoef=1$.\\
\newline
The complete rigorous justification of this approach is rather intricate. Therefore we do not formulate a proposition with precise assumptions (which would look like the ones of Proposition \ref{WideBranch}). However we will see in \S\ref{NumericsPerfectInvisibility} that numerically this methodology seems efficient to create waveguides where $\tcoef=1$.

\section{Numerical results}\label{SectionNumerics}
We give here illustrations of the results obtained in the previous section. For given $\ell>0$, $L>1$, we approximate numerically the solution of Problem (\ref{PbChampTotal}) with a P2 finite element method set in the bounded domain $\Om_{b}:=\{ (x,y)\in(-8;8)\times(0;1)\ \cup\  (-\ell/2;\ell/2)\times [1;L)\}$ (see Figure \ref{figResult1}). We emphasize that we work in a very simple geometry but other waveguides, for example with voids as depicted in Figure \ref{DomainOriginal2D}, can be considered. In particular in this geometry, one can use analytic methods (see e.g. \cite{FeEv98}) instead of finite element techniques. At $x=\pm 8$, a Dirichlet-to-Neumann map with 20 modes serves as a transparent boundary condition. From the numerical solution $v_h$, we deduce approximations $\rcoef_h$, $\tcoef_h$ of the scattering coefficients $\rcoef$, $\tcoef$ defined in (\ref{defZetaL}) (here $h$ refers to the mesh size). Then, we display the behaviour of $\rcoef_h$, $\tcoef_h$ with respect to $L$.  For the numerics, the wavenumber $k$ is set to $k=0.8\pi \in(0;\pi)$.

\subsection{Case 1: one propagating mode exists in the vertical branch of $\Om_{\infty}$}
First, we investigate the situation of \S\ref{paragraphInitialPb} where $\ell\in(0;\pi/k)$. To obtain the results of Figures \ref{CoeffS1}-\ref{VerifOscillations1}, we take $\ell=1\in(0;\pi/k)$. In Figure  \ref{CoeffS1}, we observe that, asymptotically as $L\to+\infty$, the coefficients $\rcoef_h$, $\tcoef_h$ run along circles. This is coherent with what was derived in (\ref{CircleFirstCase}). Figure \ref{VerifOscillations1} confirms that the coefficients $\rcoef$, $\tcoef$ are asymptotically periodic with respect to $L\to+\infty$. More precisely, in (\ref{CircleFirstCase}), we found that the period must be equal to $\pi/k=1.25 $, which is more or less what is obtained in Figure \ref{VerifOscillations1}. Figure \ref{VerifOscillations1} also confirms that, periodically, $\rcoef$, $\tcoef$ are equal to zero.

\begin{figure}[!ht]
\centering
\includegraphics[width=0.42\textwidth]{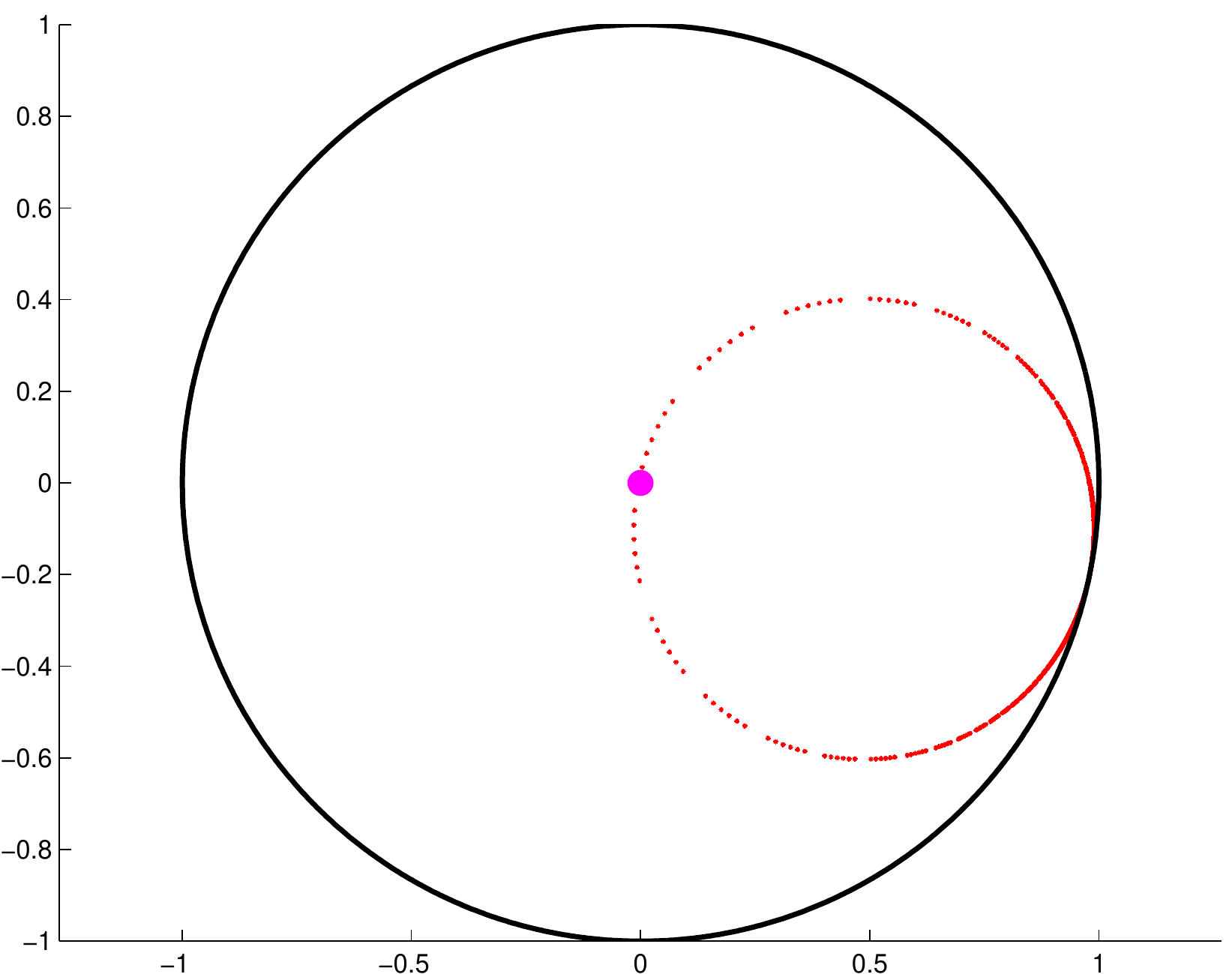}\quad\includegraphics[width=0.42\textwidth]{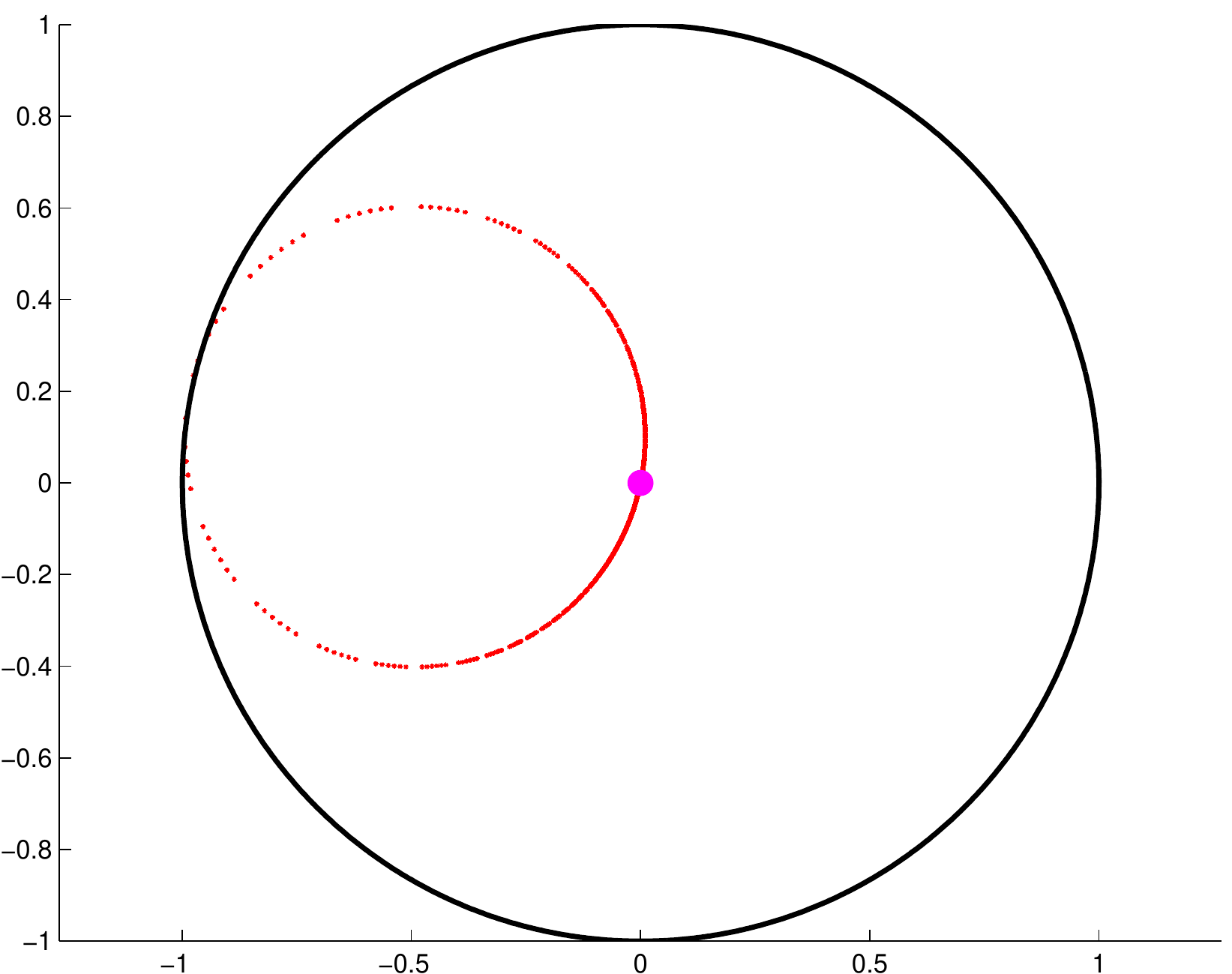}
\caption{Coefficients $\tcoef_h$ (left) and $\rcoef_h$ (right) for $\ell=1\in(0;\pi/k)$ and $L\in(2;10)$. Note that due to the conservation of energy, the coefficients $\rcoef$, $\tcoef$ are located inside the unit disk.\label{CoeffS1}}
\end{figure}

\begin{figure}[!ht]
\centering
\includegraphics[width=0.42\textwidth]{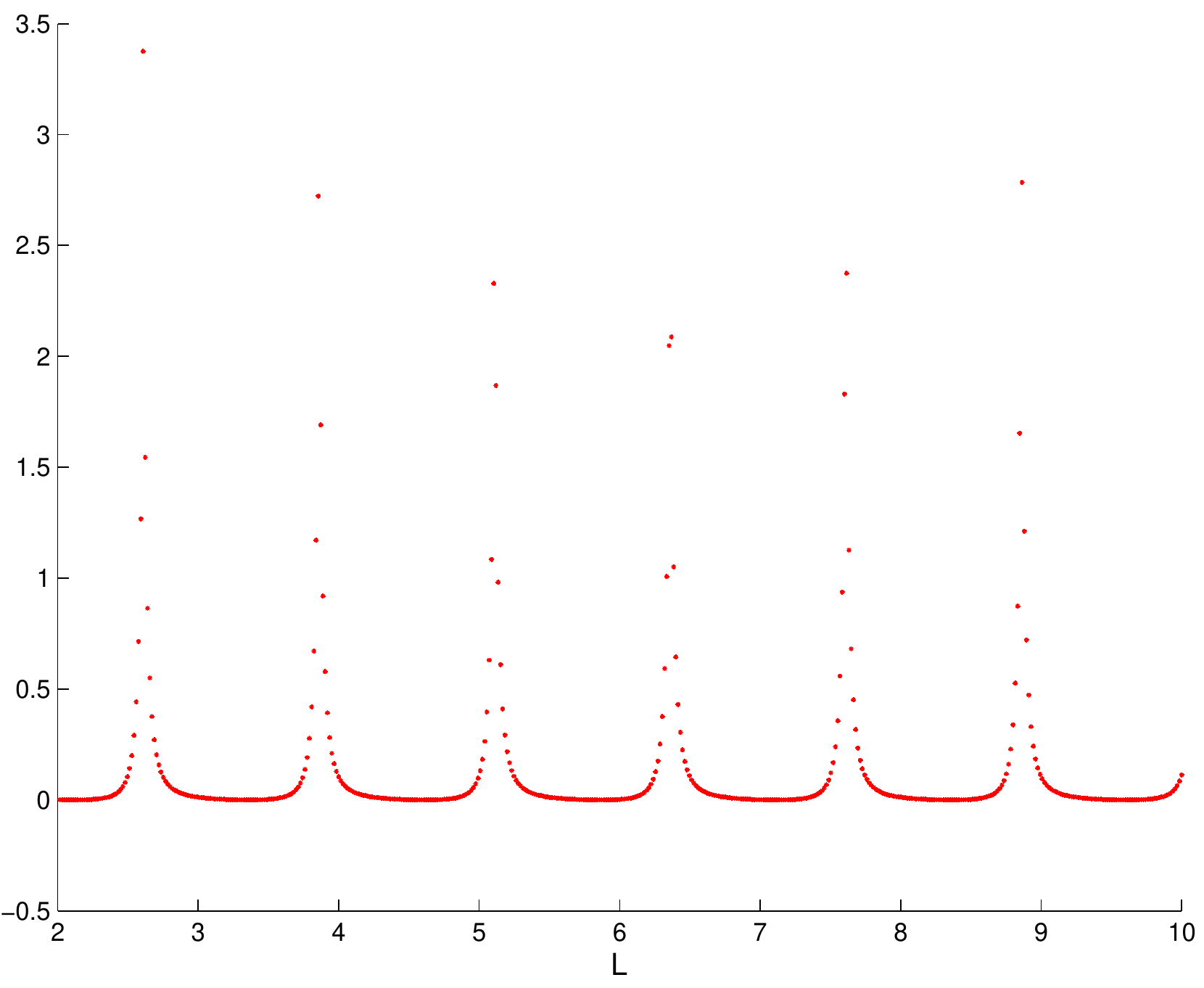}\quad\includegraphics[width=0.42\textwidth]{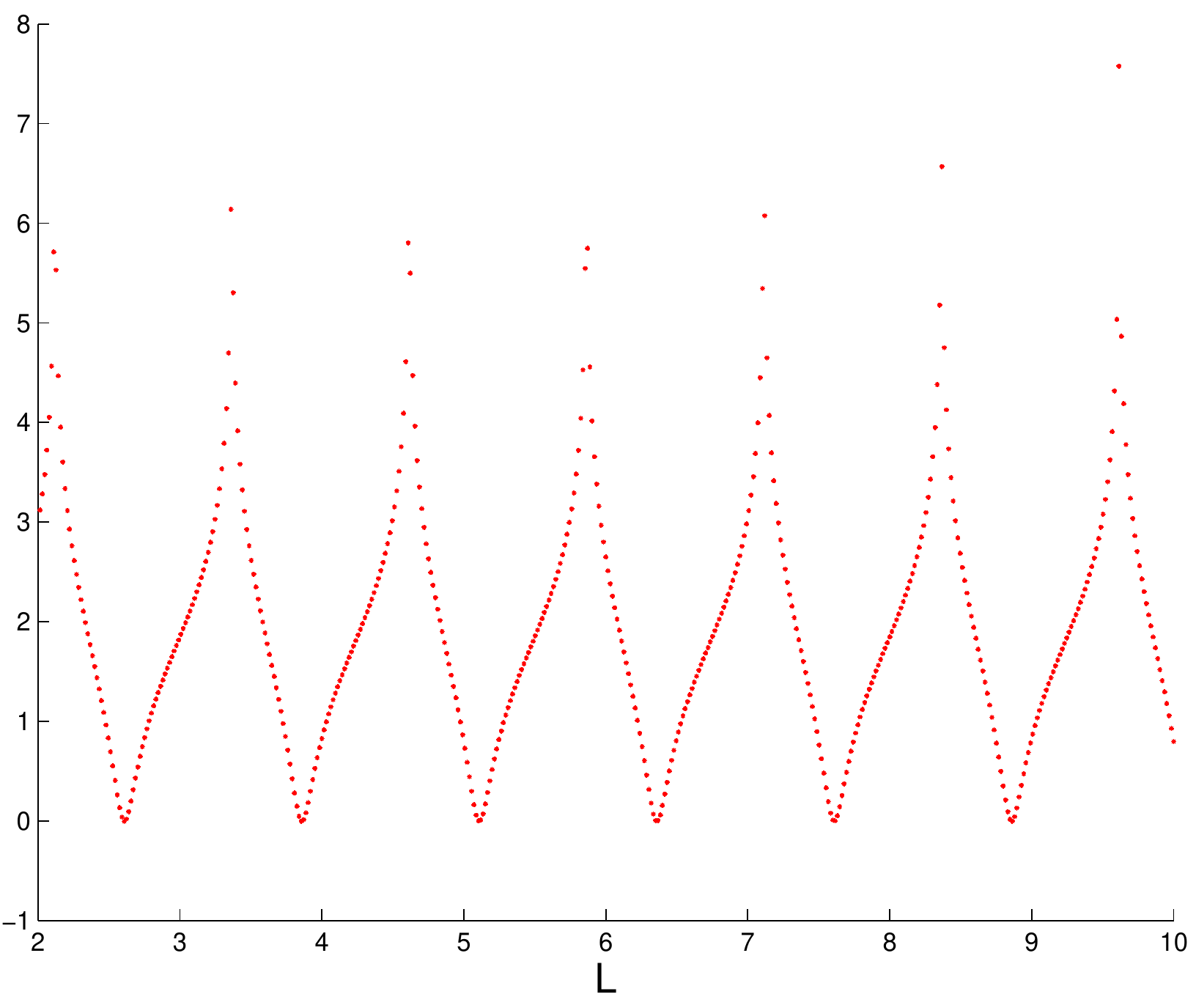}
\caption{Curves $L\mapsto -\ln|\tcoef_{h}|$ (left) and $L\mapsto -\ln|\rcoef_{h}|$ (left) for $\ell=1$.\label{VerifOscillations1}}
\end{figure}

\noindent In the next series of experiments, we study the properties of the  asymptotic circles $\{\tcoef_{\mrm{asy}}(L)\,|\,L\in(1;+\infty)\}$ and $\{\rcoef_{\mrm{asy}}(L)\,|\,L\in(1;+\infty)\}$  defined in (\ref{CircleFirstCase}) with respect to the width $\ell\in(0;\pi/k)$ of the vertical branch. For each $\ell\in(0;\pi/k)$, we showed that $\{\tcoef_{\mrm{asy}}(L)\,|\,L\in(1;+\infty)\}$ (resp. $\{\rcoef_{\mrm{asy}}(L)\,|\,L\in(1;+\infty)\}$) is a circle of radius $1/2$ centered at $- R_{\infty}/2$ (resp. $R_{\infty}/2$). Therefore, numerically it suffices, for all $\ell\in(0;\pi/k)$, to compute an approximation of the coefficient $R_{\infty}$ solving Problem (\ref{PbUnbounded}) set in $\om_{\infty}$. The results are displayed in Figure \ref{Circlep}. If we take $\ell=1$, we observe that the obtained circles coincide with the ones of Figure \ref{CoeffS1}.~

\begin{figure}[!ht]
\centering
\includegraphics[width=0.42\textwidth]{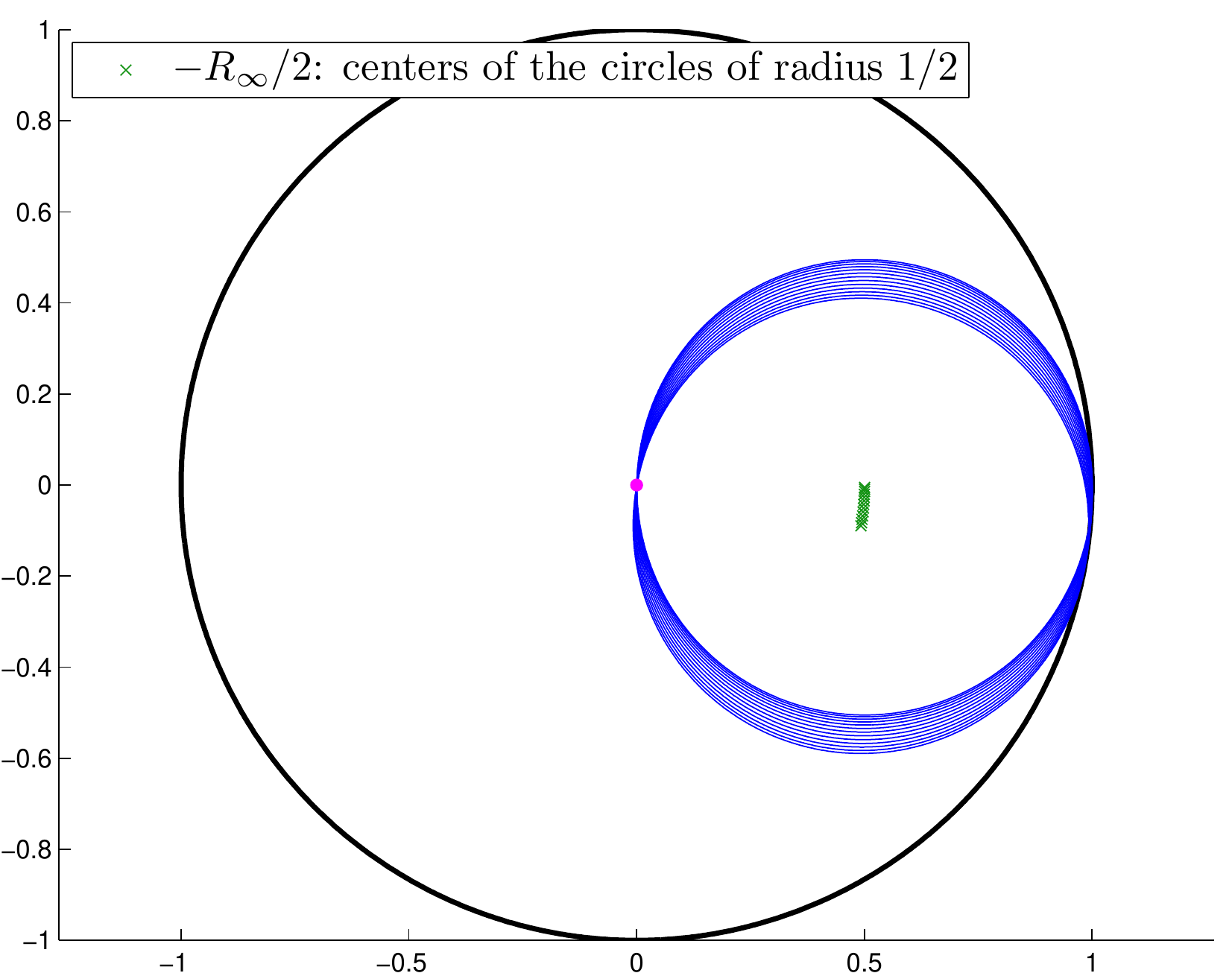}\quad\includegraphics[width=0.42\textwidth]{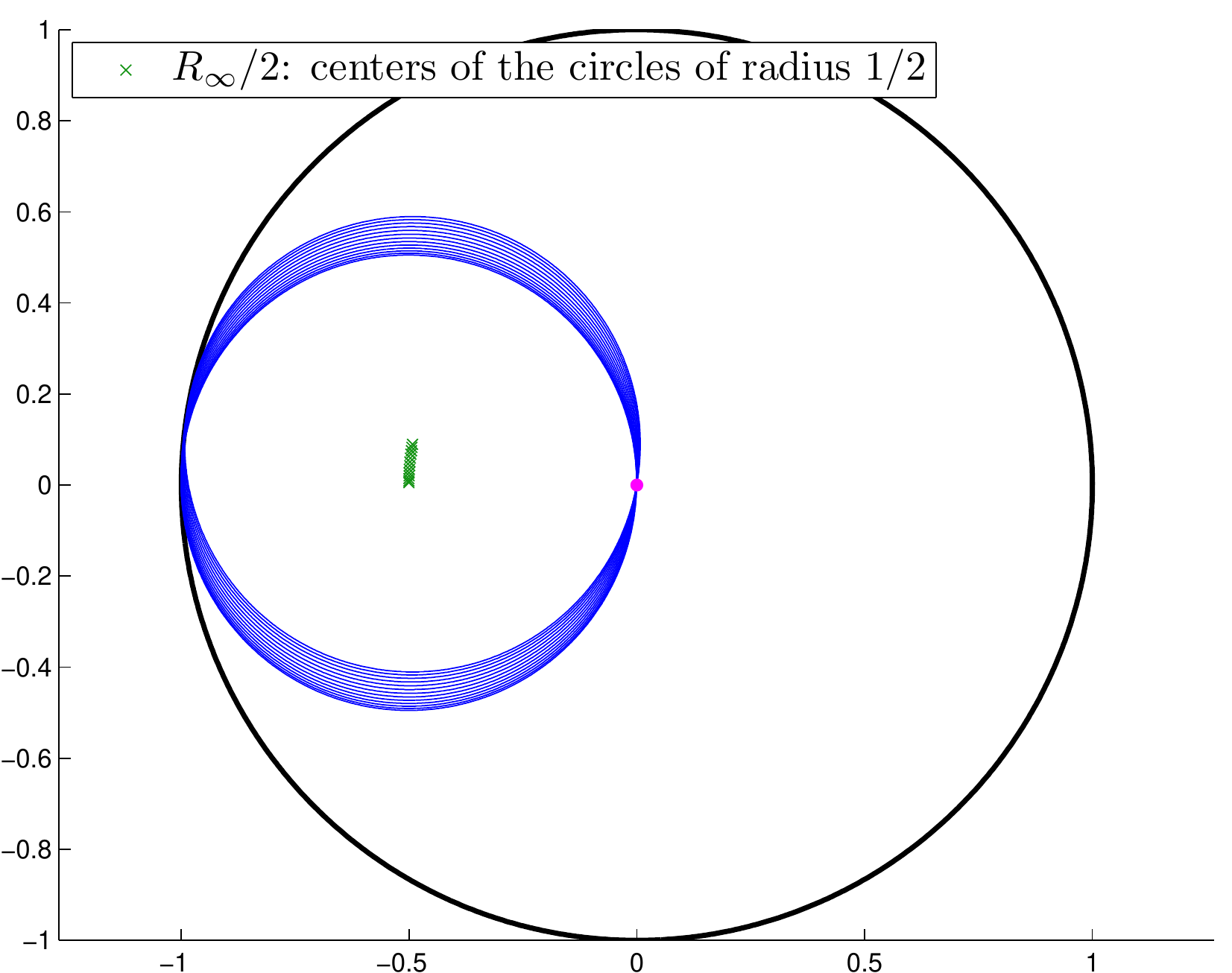}
\caption{Asymptotic circles with respect to $\ell\in(0;\pi/k)$. For each $\ell$, there is one circle. Left: $\{ \tcoef_{\mrm{asy},\,h}(L)\,|\,L\in(1;+\infty)\}$, right: $\{ \rcoef_{\mrm{asy},\,h}(L)\,|\,L\in(1;+\infty)\}$. \label{Circlep}}
\end{figure}

\subsection{Case 2: two propagating modes exist in the vertical branch of $\Om_{\infty}$}\label{numerics2modes}

Now, we consider the case $\ell\in(\pi/k;2\pi/k)$ which has been studied in  Section \ref{SectionTwoModes}. In Figures \ref{ApproxTwop} left and \ref{ApproxTwom} left, we display the behaviour of $\tcoef_{h}$, $\rcoef_{h}$ for $\ell=1.4$ and $L\in(2;10)$. Independently, numerically we can compute the coefficients $r_{\infty}$, $t_{\infty}$, $r^{\circ}_{\infty}$ (resp. $R_{\infty}$, $T_{\infty}$, $R^{\bullet}_{\infty}$) appearing in (\ref{decompoUnbounded}) (resp. (\ref{decompoUnboundedTwo})). Hence, we can approximate the coefficients $\rcoef_{\mrm{asy}}(L)$, $\tcoef_{\mrm{asy}}(L)$ defined in (\ref{AsymExpanTwo}). We denote $\rcoef_{\mrm{asy},\,h}(L)$, $\tcoef_{\mrm{asy},\,h}(L)$ these approximations. The results are given in Figures \ref{ApproxTwop} right and \ref{ApproxTwom} right. We observe that the curves are in good agreement, that is $\tcoef_{h}$ (resp. $\rcoef_{h}$) and $\tcoef_{\mrm{asy},\,h}(L)$ (resp. $\rcoef_{\mrm{asy},\,h}(L)$) are close to each other. Figure \ref{CurvesErreur}, where the errors $L\mapsto |\tcoef_{h}-\tcoef_{\mrm{asy},\,h}(L)|$ and $L\mapsto |\rcoef_{h}-\rcoef_{\mrm{asy},\,h}(L)|$ are displayed, confirms this impression. Errors are small even though $L$ is not that large. This is due to exponential convergence with respect to $L$. Actually on Figure \ref{CurvesErreur}, we observe that rapidly the numerical error becomes predominant with respect to the asymptotic error as $L$ increases.

\begin{figure}[!ht]
\centering
\includegraphics[width=0.45\textwidth]{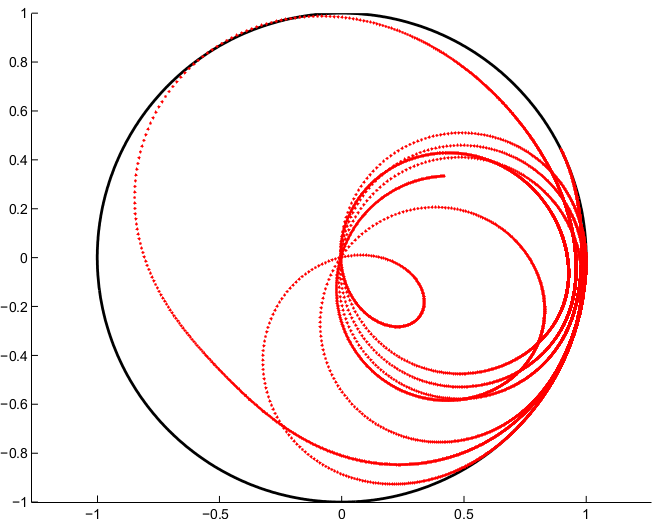}\quad\includegraphics[width=0.45\textwidth]{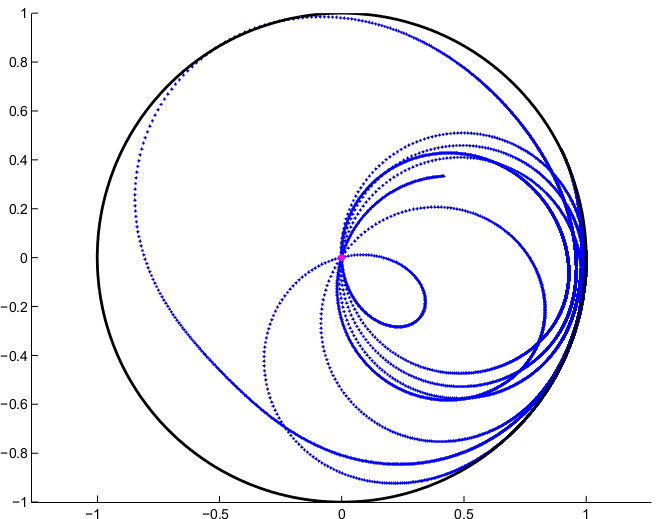}
\caption{Coefficients $\tcoef_{L,\,h}$ (left) and $\tcoef_{\mrm{asy},\,h}(L)$ (right) for $\ell=1.4\in(\pi/k;2\pi/k)$ and $L\in(2;10)$.\label{ApproxTwop}}
\end{figure}

\begin{figure}[!ht]
\centering
\includegraphics[width=0.45\textwidth]{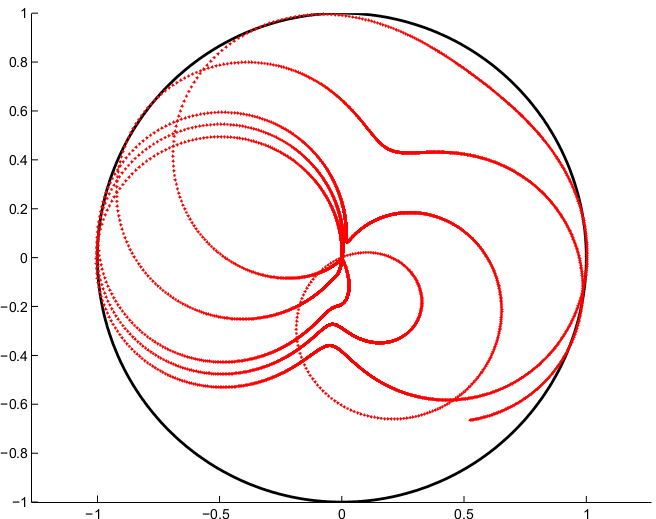}\quad\includegraphics[width=0.45\textwidth]{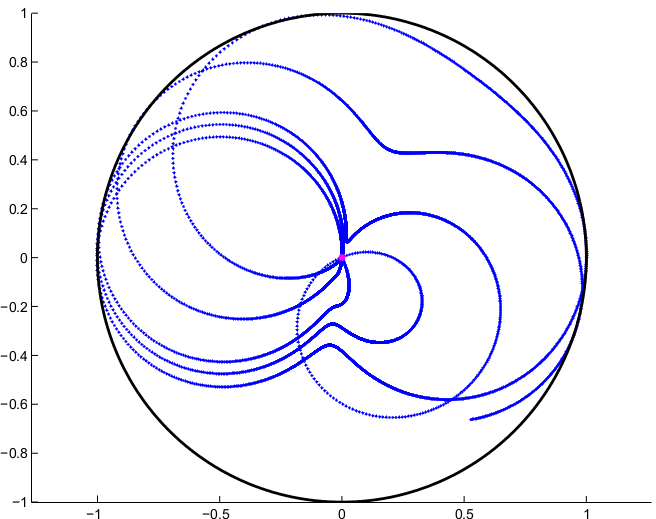}
\caption{Coefficients $\rcoef_{L,\,h}$ (left) and $\rcoef_{\mrm{asy},\,h}(L)$ (right) for $\ell=1.4\in(\pi/k;2\pi/k)$ and $L\in(2;10)$.\label{ApproxTwom}}
\end{figure}

\begin{figure}[!ht]
\centering
\includegraphics[scale=0.8,trim={0cm 2.6cm 0cm 3cm},clip]{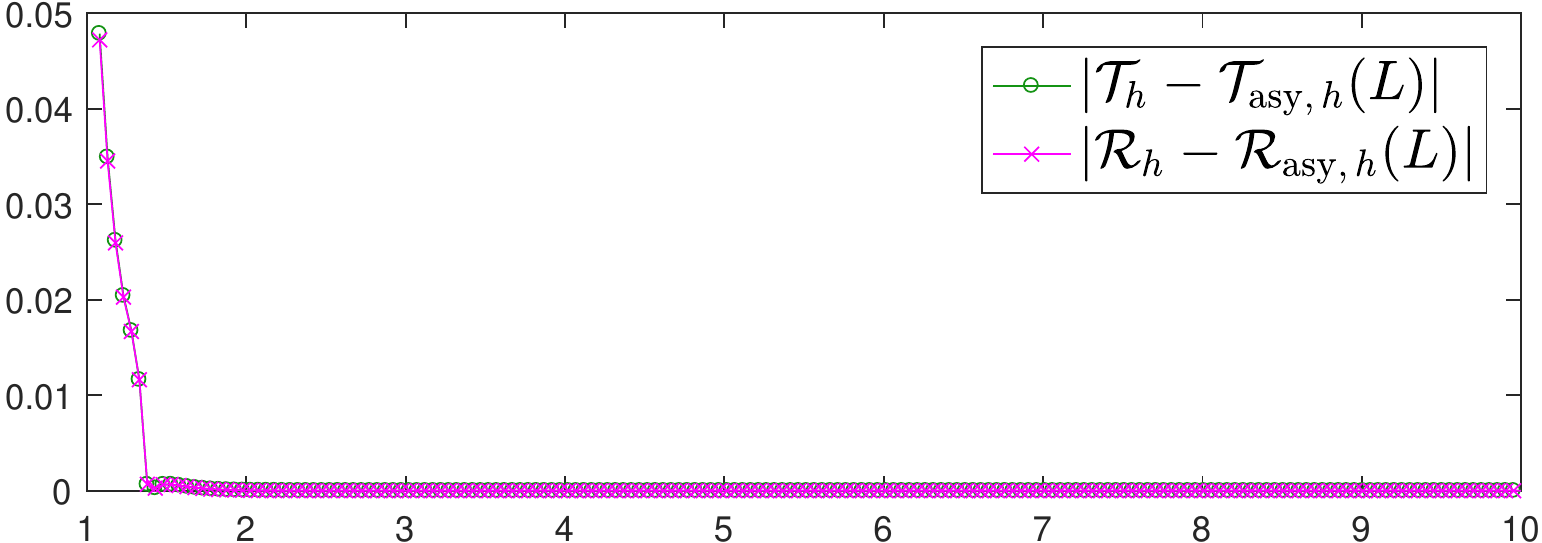}\caption{Curves $L\mapsto |\tcoef_{h}-\tcoef_{\mrm{asy},\,h}(L)|$ and $L\mapsto |\rcoef_{h}-\rcoef_{\mrm{asy},\,h}(L)|$ for $\ell=1.4$.\label{CurvesErreur}}
\end{figure}~

\newpage

\noindent In Figure \ref{Album}, we display the behaviour of the curves $\{\tcoef_{\mrm{asy}}(L)\,|\,L\in(1;+\infty)\}$ and $\{\rcoef_{\mrm{asy}}(L)\,|\,L\in(1;+\infty)\}$ for several particular values of the width $\ell$ of the vertical branch of the waveguide. More precisely, we choose $\ell$ such that 
\[
k=m\alpha\qquad\Leftrightarrow \qquad \ell=\frac{\pi}{ k}\,\frac{m}{\sqrt{m^2-n^2}}. 
\]
with $m=2,3,4,5$. In (\ref{RelationRatio}), we showed that in this case, $\{\tcoef_{\mrm{asy}}(L)\,|\,L\in(1;+\infty)\}$ and $\{\rcoef_{\mrm{asy}}(L)\,|\,L\in(1;+\infty)\}$ must be close curves in the complex plane. Our simulations are in accordance with this result. 

\newpage

\begin{figure}[!ht]
\centering
\begin{tabular}{cc} 
\includegraphics[width=0.43\textwidth]{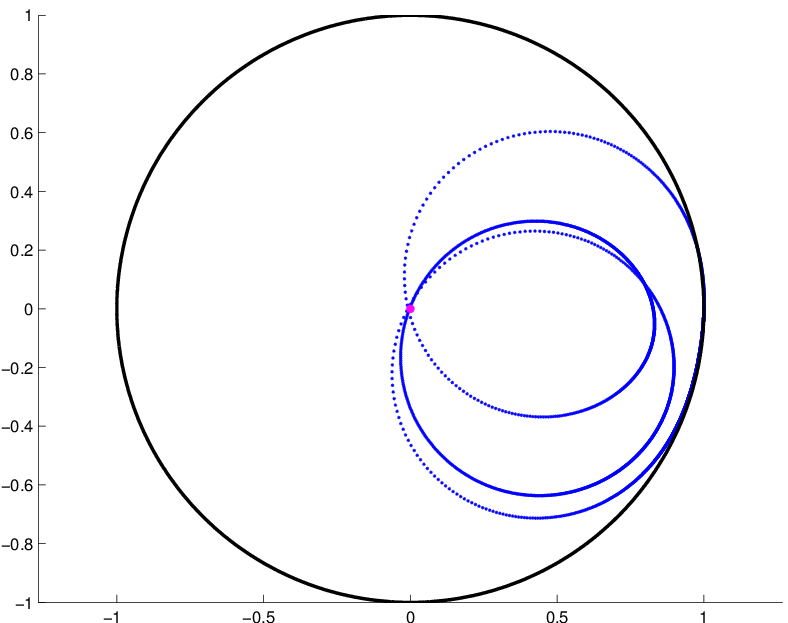} & \includegraphics[width=0.43\textwidth]{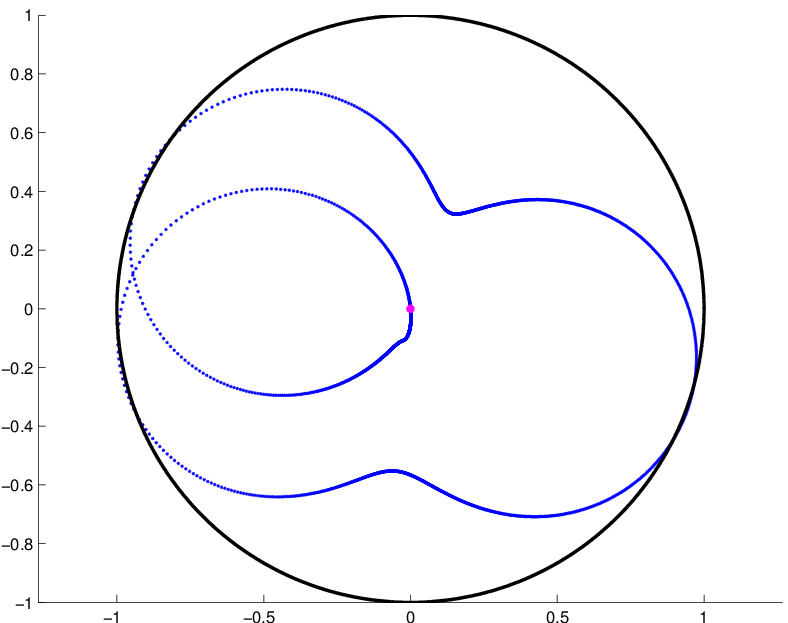}\\[4pt]
\includegraphics[width=0.43\textwidth]{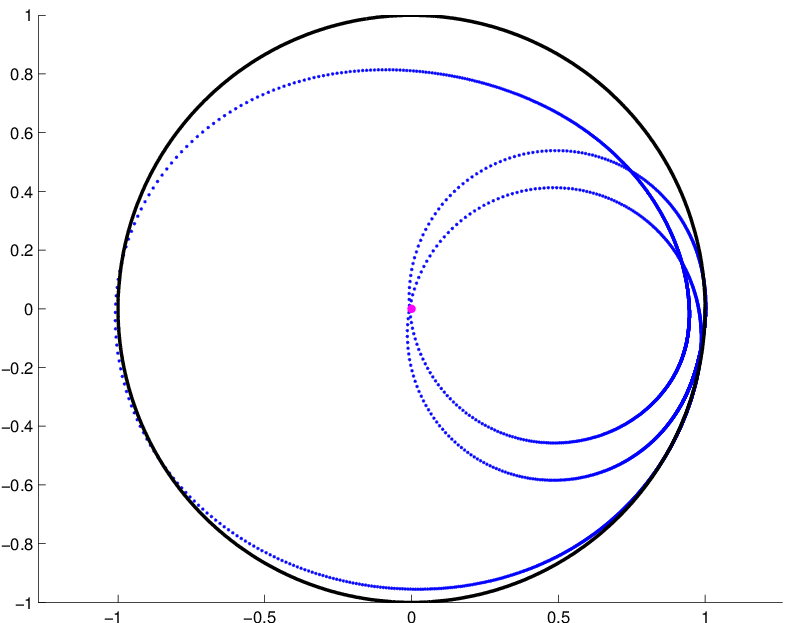} & \includegraphics[width=0.43\textwidth]{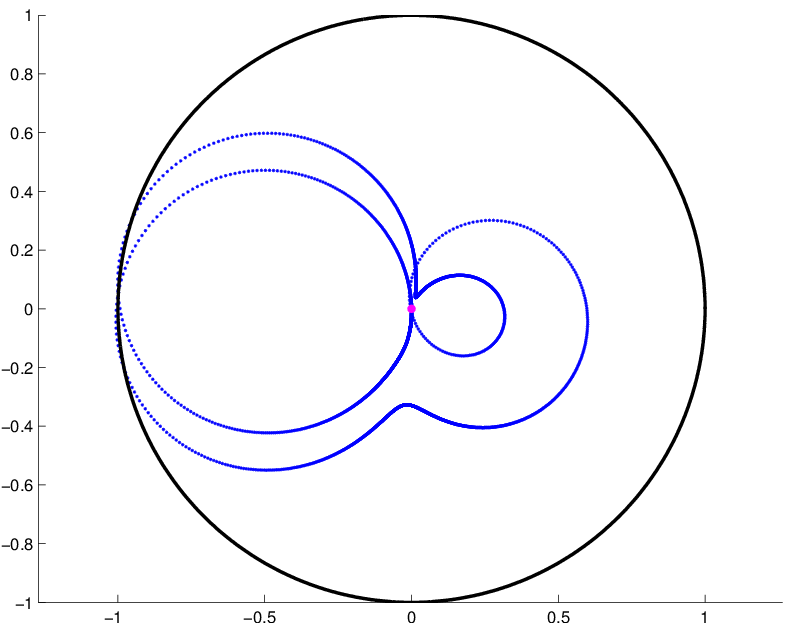}\\[4pt]
\includegraphics[width=0.43\textwidth]{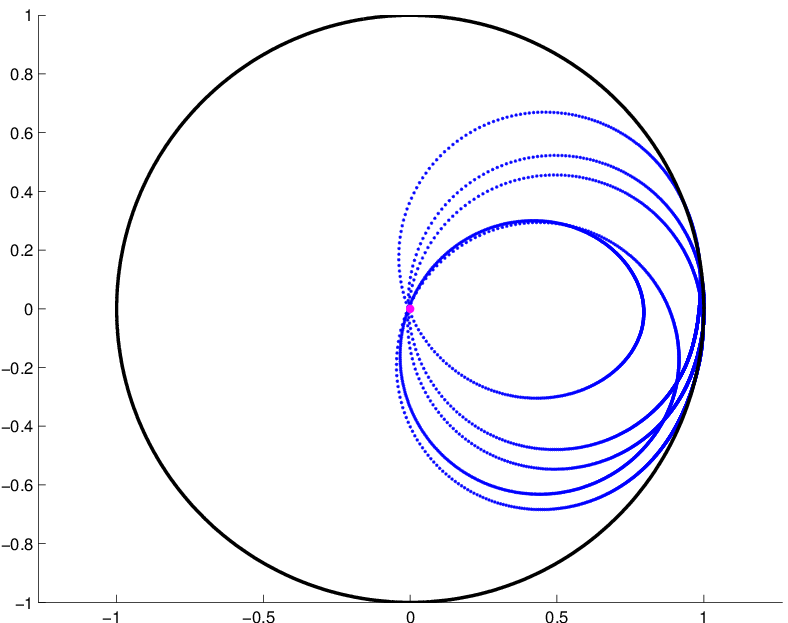} & \includegraphics[width=0.43\textwidth]{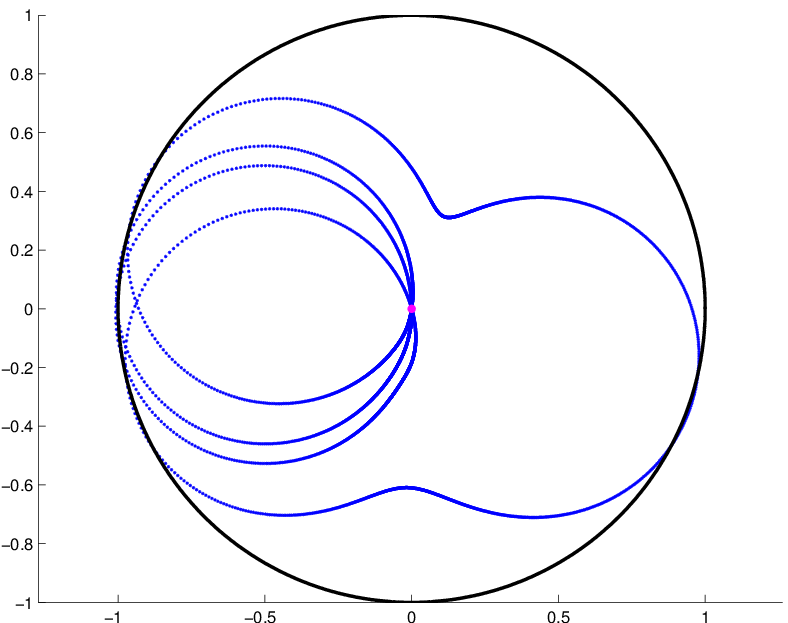}\\[4pt]
\includegraphics[width=0.43\textwidth]{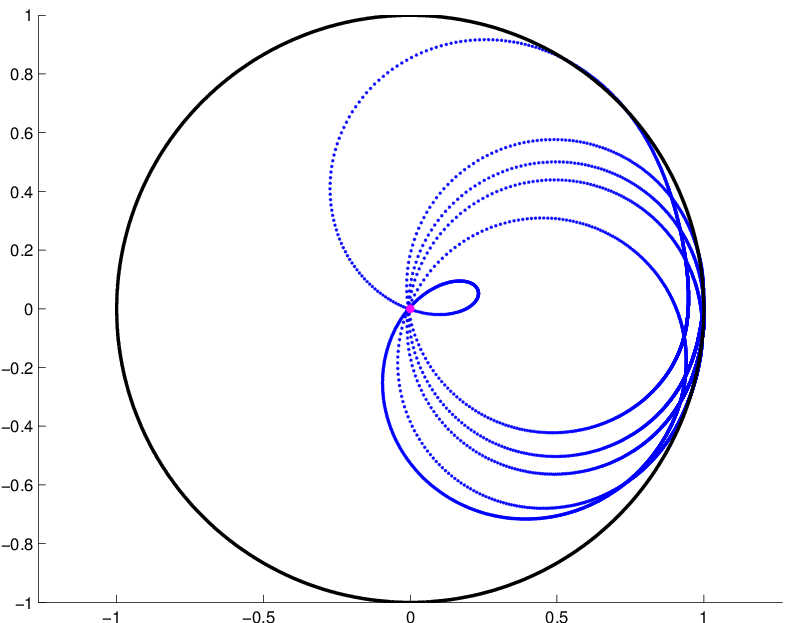} & \includegraphics[width=0.43\textwidth]{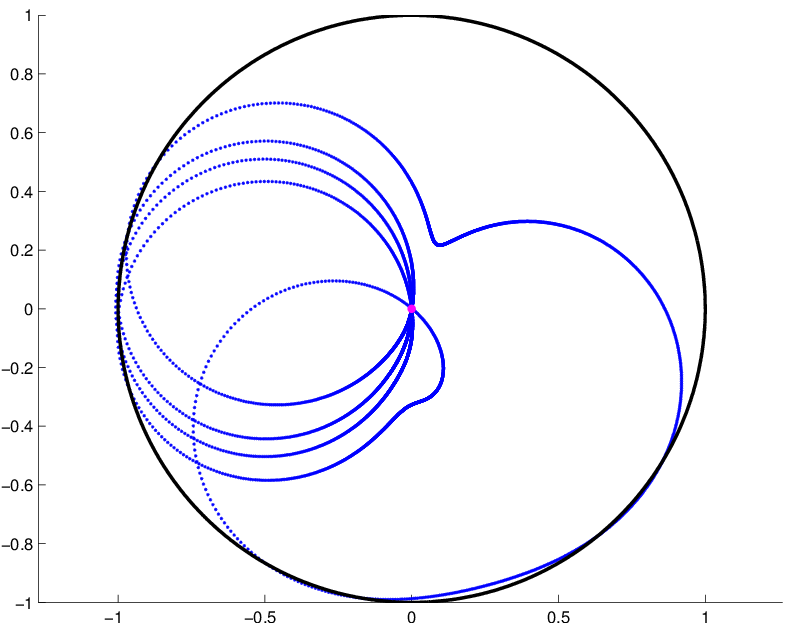}
\end{tabular}
\caption{Curves $\{\tcoef_{\mrm{asy},\,h}(L)\,|\,L\in(1;+\infty)\}$ (left) and $\{\rcoef_{\mrm{asy},\,h}(L)\,|\,L\in(1;+\infty)\}$ (right) for several values of $\ell\in(\pi/k;2\pi/k)$. For the line $m-1$, $m=2,3,4,5$, we take $\ell=m\pi/(k\sqrt{m^2-1})$. \label{Album}}
\end{figure}

\newpage
\clearpage

\noindent In Figure \ref{ApproxTwomDense}, we represent the numerical approximation of the curves $\{\tcoef_{\mrm{asy}}(L)\,|\,L\in(2;200)\}$ and $\{\rcoef_{\mrm{asy}}(L)\,|\,L\in(2;200)\}$ for  $\ell=1.7$. We can prove in this case that the ratio $k/\alpha$ is an irrational number. As predicted, the curves goes through zero. It seems also that they fill the unit disk. However, we are not able to prove it. 

\begin{figure}[!ht]
\centering
\includegraphics[scale=0.47]{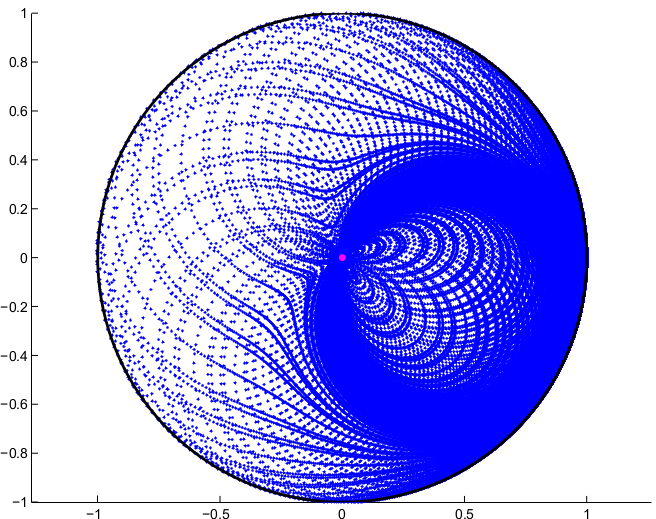}\quad\includegraphics[scale=0.47]{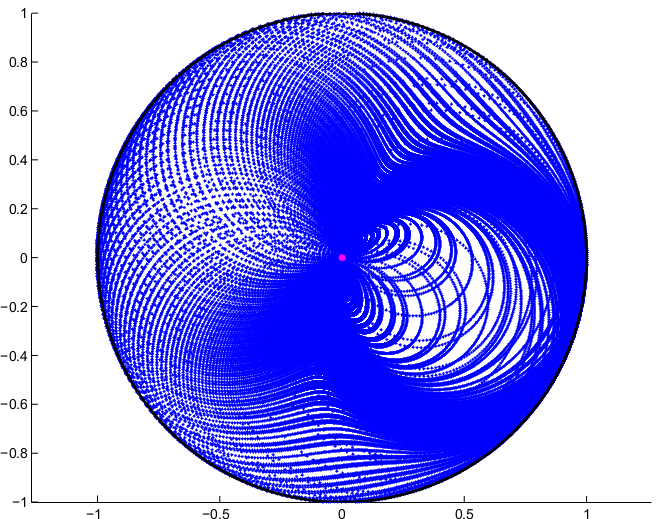}
\caption{Coefficients $\tcoef_{\mrm{asy},\,h}(L)$ (left) and $\rcoef_{\mrm{asy},\,h}(L)$ (right) for $\ell=1.7\in(\pi/k;2\pi/k)$ and $L\in(2;200)$.\label{ApproxTwomDense}}
\end{figure}

\subsection{Non reflectivity}
We give examples of waveguides where $\rcoef=0$ for well-chosen $\ell$ and $L$. Numerically we set $\ell$ and then we compute $\rcoef_{h}$ for a range of $L$. Finally, we select the $L$ such that $-\ln|\rcoef_{h}|$ is maximum.  In Figure \ref{figResult1} top, the parameters are set to $\ell=1\in(0;\pi/k)$ and $L=3.3649$ ($\rcoef_{h}\approx (-7.8+8.9i).10^{-6}$). In Figure \ref{figResult1} bottom, we have $\ell=2\in(\pi/k;2\pi/k)$ and $L=5.5329$ ($\rcoef_{h}\approx (1.4+i).10^{-5}$). As expected, the amplitude of the field $v_h-w^+_h$ is very small in the input lead. In Figure \ref{figResultObstacle}, we give another example of geometry where $\rcoef=0$. The waveguide contains two rather large non penetrable obstacles with Neumann boundary condition. Therefore one would expect that some energy would be backscattered. But due to the presence of the vertical branch whose height has been finely tuned, this is not the case and energy is completely transmitted ($|\mathcal{T}|=1$). In Figure \ref{figResultAngleDroit1}, we display a $\sf{L}$-shaped waveguide where $\rcoef=0$. The geometry is analogous to the one of Figure \ref{FigConclu} top right and is unbounded in the left and bottom directions. We play with the length of the diagonal branch. This framework is not exactly the one described in Section \ref{DomainOriginal2D}. However, due to the symmetry, it can be dealt with in a completely similar way. 

\begin{figure}[!ht]
\centering
\includegraphics[width=11.4cm]{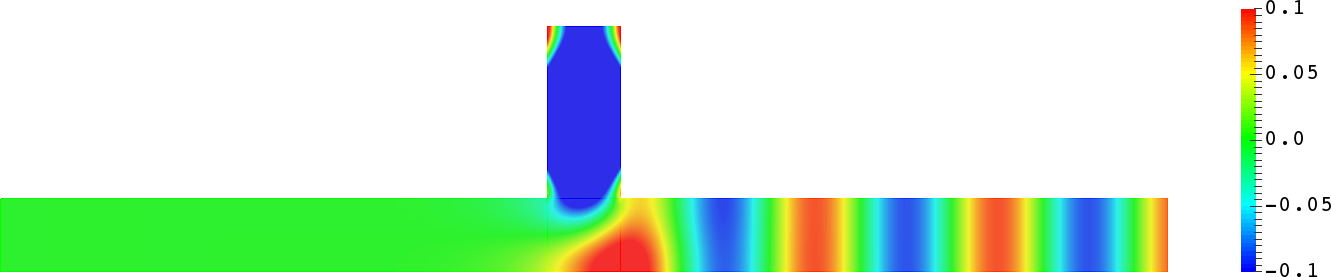}\\[14pt]\includegraphics[width=11.4cm]{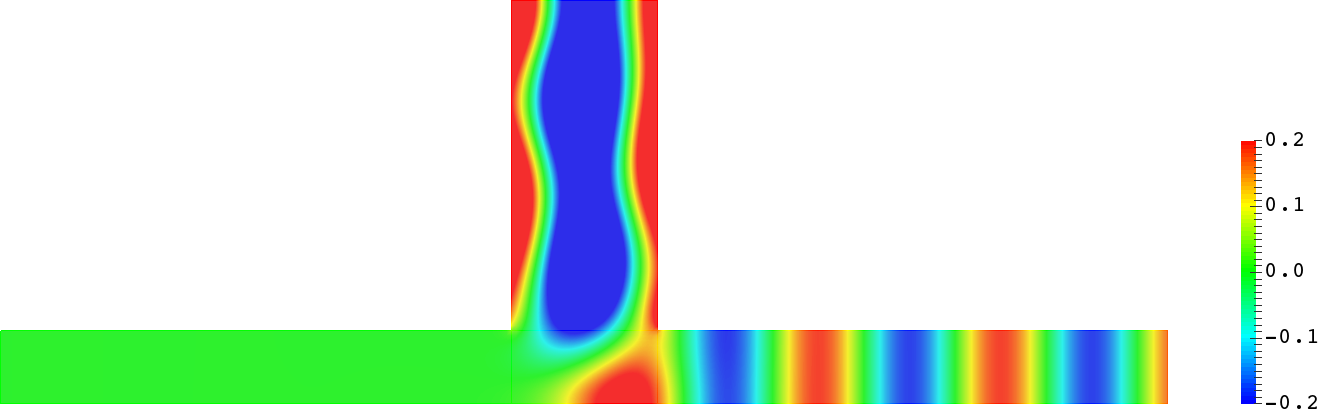} 
\caption{Real part of $v_h-w^+_h$ in two geometries where $\rcoef=0$ (non reflectivity). \label{figResult1}}
\end{figure}~

\begin{figure}[!ht]
\centering
\includegraphics[width=11.4cm]{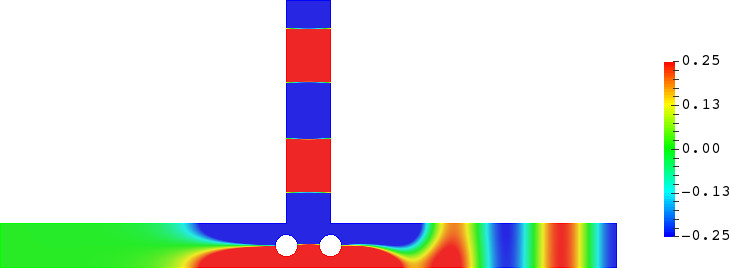}
\caption{Real part of $v_h-w^+_h$ in another geometry where $\rcoef=0$ (non reflectivity). \label{figResultObstacle}}
\end{figure}~

\begin{figure}[!ht]
\centering
\includegraphics[width=5.4cm]{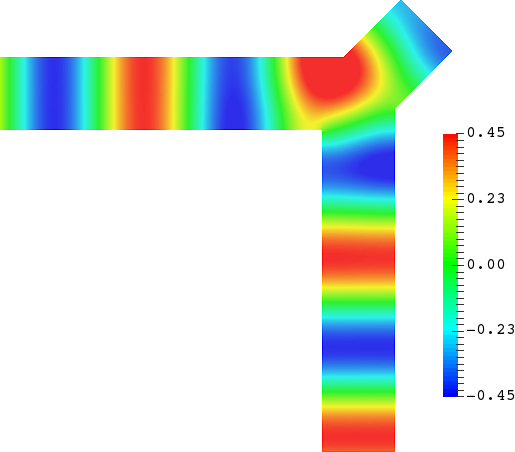}\qquad\qquad
\includegraphics[width=5.4cm]{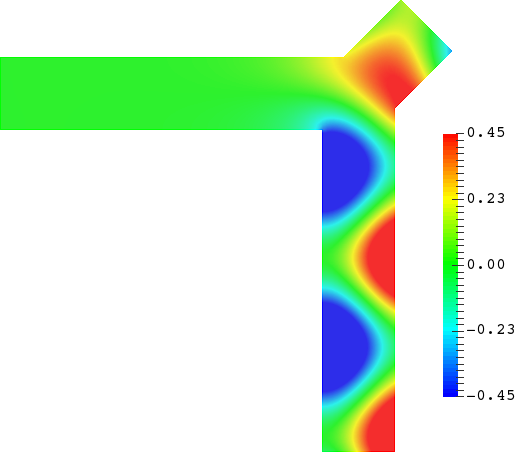}
\caption{Real parts of $v_h$ (left) and $v_h-w^+_h$ (right) in a $\sf{L}$-shaped waveguide where $\rcoef=0$ (non reflectivity).  
\label{figResultAngleDroit1}}
\end{figure}

\subsection{Perfect reflectivity}
Now in Figures \ref{figResult2} and \ref{figResultAngleDroit2}, we provide examples of waveguides where $\tcoef=0$. This time, we select the $L$ such that $-\ln|\tcoef_{h}|$ is maximum. In Figure \ref{figResult2} top, the parameters are set to $\ell=1\in(0;\pi/k)$ and $L=3.85962$ ($\tcoef_{h}\approx (-5.3+6.1i).10^{-4}$). In Figure \ref{figResult2} bottom, we take $\ell=2\in(\pi/k;2\pi/k)$ and $L=3.152073$ ($\tcoef_{h}\approx (-7.3+2.7i).10^{-4}$). As expected, the amplitude of the total field is very small in the output lead.

\begin{figure}[!ht]
\centering
\includegraphics[width=11.4cm]{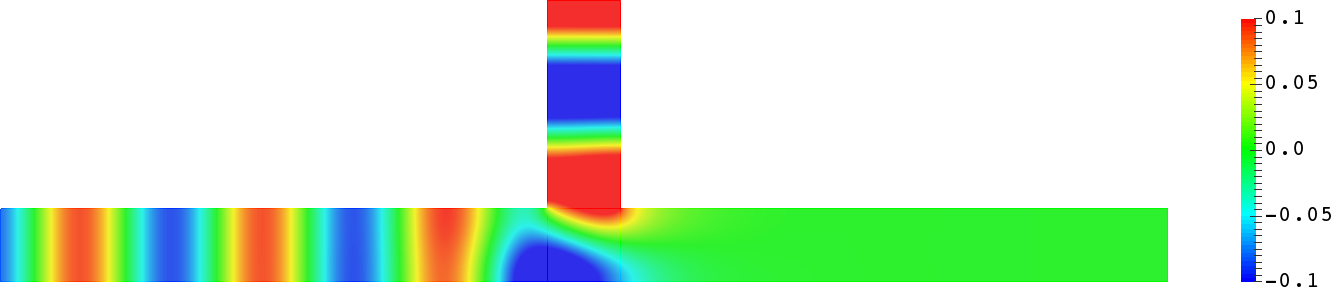}\\[14pt]\includegraphics[width=11.4cm]{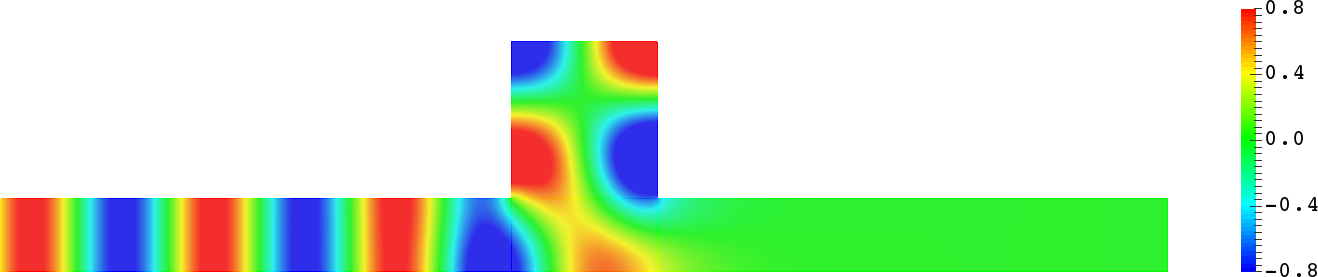} 
\caption{Real part of the total field $v_h$ in geometries where $\tcoef=0$ (perfect reflectivity).  
\label{figResult2}}
\end{figure}

\begin{figure}[!ht]
\centering
\includegraphics[width=5.4cm]{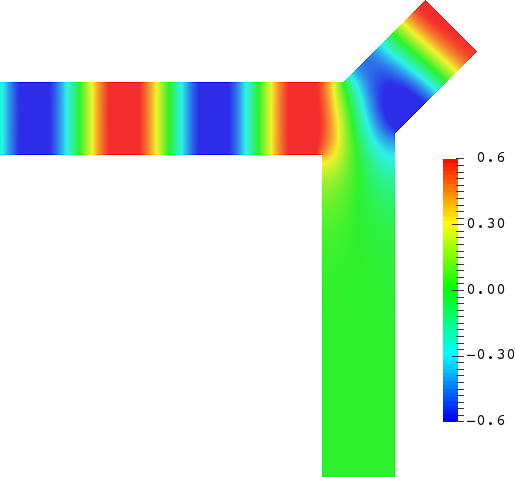}
\caption{Real part of the total field $v_h$ in a $\sf{L}$-shaped  waveguide (similar to the one of Figure \ref{figResultAngleDroit1}) where $\tcoef=0$. The geometry is unbounded in the left and bottom directions. We tune the length of the diagonal branch. 
\label{figResultAngleDroit2}}
\end{figure}

\newpage

\subsection{Perfect invisibility}\label{NumericsPerfectInvisibility}
Finally, we show in Figure \ref{figResult3} an example of waveguide where $\tcoef=1$. We start from the domain 
\[
\Om_{L}^\gamma:=\{ (x,y)\in\R\times(0;1)\ \cup\  (-\ell/2;\ell/2)\times [1;L)\ \cup\ (\pm\vartheta-1/2;\pm\vartheta+1/2)\times [1;\gamma).
\]
Numerically, first we set $\ell\in(0;\pi/k)$ and $\vartheta=1.5$ (see the notation in Section \ref{SectionCompleteInvisibility}, Figure \ref{TwoChem}). Then we approximate the solution of Problem (\ref{PbUnbounded}) (half-waveguide problem with mixed boundary conditions) for $\gamma\in(1;10)$. We select one $\gamma=\gamma_{\infty}$ such that $-\ln|R_{\infty,\,h}+1|$ is maximum. Thus we impose  $R_{\infty,\,h}\approx-1$. Eventually, we approximate the solution of the initial Problem (\ref{PbInitial}) set in $\Om^{\gamma_{\infty}}_L$ for $L\in(5;10)$ and we take $L$ such that $-\ln|\tcoef_{h}-1|$ is maximum. We try to cover a range of (relatively) high values of $L$ so that $R_{h}$ remains close to $R_{\infty,\,h}\approx-1$. Indeed, we remind the reader that the error $|\rD-R_{\infty}|$ is exponentially small as $L\to+\infty$ (see (\ref{Resultat0mode})). The parameters are set to $\ell=1\in(0;\pi/k)$, $\gamma=2.4959$ and $L=6.384936$. As expected, the amplitude of the field $v_h-w^+_h$ is very small both in the input and output leads. Numerically, we obtain a scattering coefficient $\tcoef_{h}$ such that $\tcoef_{h}-1\approx (-6.5+1.4i).10^{-6}$.

\begin{figure}[!ht]
\centering
\includegraphics[width=11.4cm]{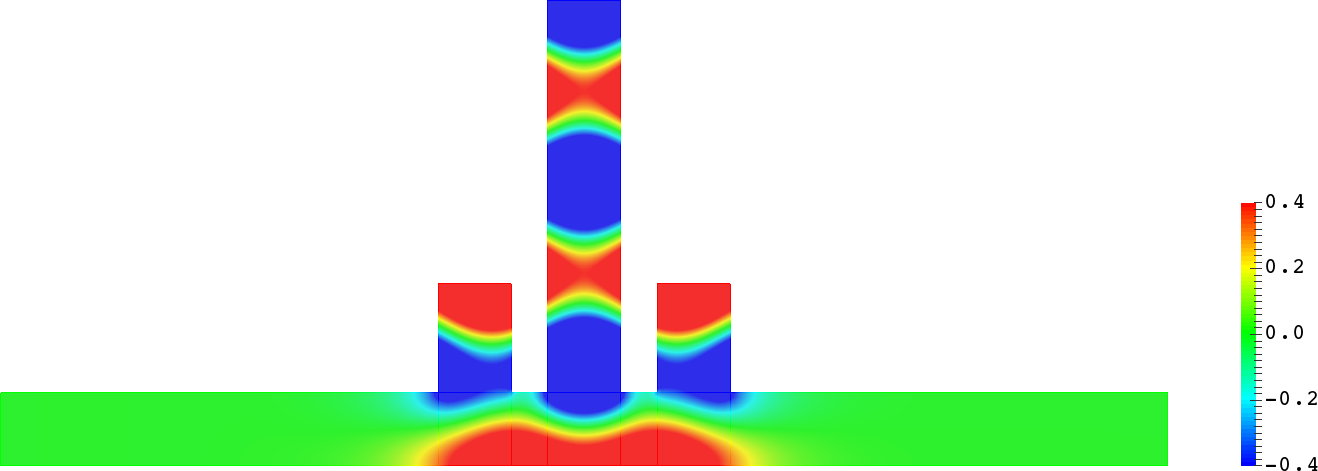}\\[10pt]\includegraphics[width=11.4cm]{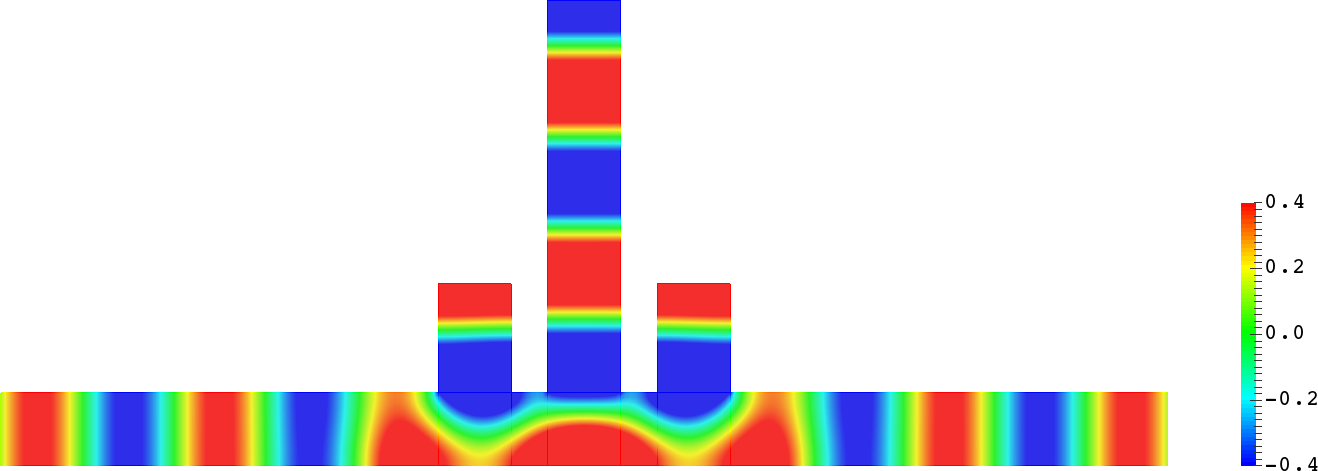} 
\caption{Example of geometry where $\tcoef=1$ (perfect invisibility). Top: real part of $v_h-w^+_h$. Bottom: real part of the total field $v_h$.
\label{figResult3}}
\end{figure}
\newpage
\section{Concluding remarks}\label{sectionConclusion}
\begin{figure}[!ht]
\centering
\raisebox{2mm}{\begin{tikzpicture}[scale=1.4]
\draw[fill=gray!30,draw=none](-2,-0.5) rectangle (2,0.5);
\draw[fill=gray!30,rotate=-60](-2,-0.2) rectangle (0,0.2);
\draw[fill=gray!30,rotate=60](0,-0.2) rectangle (2,0.2);
\draw[fill=gray!30,draw=none](-2,-0.5) rectangle (2,0.5);
\draw[fill=white] (0,0) ellipse (0.4 and 0.3);
\draw[dashed] (2.5,0.5)--(2,0.5); 
\draw[dashed] (2.5,-0.5)--(2,-0.5); 
\draw[dashed] (-2.5,0.5)--(-2,0.5); 
\draw[dashed] (-2.5,-0.5)--(-2,-0.5); 
\draw (-2,0.5)--(-0.52,0.5);
\draw (-0.06,0.5)--(0.06,0.5);
\draw (2,0.5)--(0.52,0.5);
\draw (-2,-0.5)--(2,-0.5);
\draw[dotted,>-<,rotate=60] (0,0)--(2.05,0);
\draw[dotted,>-<,rotate=120] (0,0)--(2.05,0);
\draw[dotted,>-<,rotate=120] (0,0)--(2.05,0);
\node at (0.6,1){\small $L$};
\node at (-0.6,1){\small $L$};
\end{tikzpicture}}\qquad\qquad\begin{tikzpicture}[scale=1.2]
\draw[fill=gray!30](-0.4,-0.4) rectangle (0.4,2);
\draw[fill=gray!30,draw=none,rotate=20](-2,-0.5) rectangle (0,0.5);
\draw[fill=gray!30,draw=none,rotate=-20](0,-0.5) rectangle (2,0.5);
\draw[fill=white](-0.5,-0.4) rectangle (0.5,0.1);
\draw[dashed,rotate=-20] (2.5,0.5)--(2,0.5); 
\draw[dashed,rotate=-20] (2.5,-0.5)--(2,-0.5); 
\draw[dashed,rotate=20] (-2.5,0.5)--(-2,0.5); 
\draw[dashed,rotate=20] (-2.5,-0.5)--(-2,-0.5); 
\draw[rotate=20] (-2,0.5)--(-0.24,0.5);
\draw[rotate=-20] (2,0.5)--(0.24,0.5);
\draw[rotate=20] (-2,-0.5)--(-0.17,-0.5);
\draw[rotate=-20] (2,-0.5)--(0.17,-0.5);
\draw[dotted,>-<] (0,0.32)--(0,2.05);
\node at (0.2,1.2){\small $L$};
\end{tikzpicture}\\[20pt]
\begin{tikzpicture}[scale=1.4]
\draw[fill=gray!30](-0.1,0) rectangle (0.1,2.5);
\draw[fill=gray!30] (0,0.8) circle (0.22);
\draw[fill=gray!30] (0,1.3) circle (0.22);
\draw[fill=gray!30] (0,1.8) circle (0.22);
\draw[fill=gray!30] (0,2.3) circle (0.22);
\draw[fill=gray!30,draw=none](-0.09,0) rectangle (0.09,2.5);
\draw[fill=gray!30,draw=none](-2,-0.5) rectangle (2,0.5);
\draw[dashed] (2.5,0.5)--(2,0.5); 
\draw[dashed] (2.5,-0.5)--(2,-0.5); 
\draw[dashed] (-2.5,0.5)--(-2,0.5); 
\draw[dashed] (-2.5,-0.5)--(-2,-0.5); 
\draw[dashed] (-2.5,0.5)--(-2,0.5); 
\draw (-2,-0.5)--(2,-0.5); 
\draw (-2,0.5)--(-0.1,0.5); 
\draw (2,0.5)--(0.1,0.5); 
\draw[dotted,>-<] (0,0.45)--(0,2.58);
\node at (0.4,1.5){\small $L$};
\end{tikzpicture}
\caption{Top: non perpendicular branches. Bottom: periodic vertical branch. \label{FigConclu}} 
\end{figure}

In this article, we have explained how to construct waveguides where non reflectivity ($\rcoef=0$, $|\tcoef|=1$), perfect reflectivity ($|\rcoef|=1$, $\tcoef=0$) or perfect invisibility ($\rcoef=0$, $\tcoef=1$) hold. To proceed, we have worked in geometries which are symmetric with respect to the $(Oy)$ axis and which have one or several vertical branch(es) of finite length. In our presentation, we have always assumed that the branch of finite length is perpendicular to the principal waveguide. Such an assumption is not needed and situations like the ones of Figure \ref{FigConclu} top can be considered. We could also investigate settings with truncated periodic waveguides as described in Figure \ref{FigConclu} bottom as long as waves can propagate in the vertical branch. Again we mention that we have considered only $\mrm{2D}$ problems with Neumann boundary condition but higher dimension with other boundary conditions can be dealt with using exactly the same procedure. The analysis we have presented works for the monomode regime ($0<k<\pi$). It seems complicated to extend it to configurations where several propagating modes exist in the horizontal waveguide.

\section*{Appendix}
In this Appendix, we gather the proofs of two results used in the preceding analysis.
\begin{proposition}\label{PropoAsymptotic}
Let $R$, $R_{\infty}\in\Cplx$ denote the reflection coefficients appearing respectively in (\ref{defZetaLanti}) and in (\ref{ScatteringDemiMixed}). Assume that Problem (\ref{PbUnbounded}) admits only the zero solution in $\mH^1(\om_{\infty})$ (absence of trapped modes). Then there is a constant $C>0$ independent of $L$ such that 
\begin{equation}\label{MainEstimationCoef}
|\rD - R_{\infty} | \le C\,e^{-\sqrt{(\pi/\ell)^2-k^2} L}.
\end{equation}
\end{proposition}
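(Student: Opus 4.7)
The plan is to construct a quasi-solution on $\om_L$ out of $U_\infty$, control the mismatch it creates on the artificial cross-section $\Gamma_L:=(-\ell/2,0)\times\{L\}$ by the exponential decay of $U_\infty$ in the vertical branch, and then invert the truncated scattering problem via a stability estimate uniform in $L$.

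The key spectral observation underlying the whole argument is that in the vertical branch $(-\ell/2,0)\times(1,+\infty)$ of $\om_\infty$, the transverse operator $-\partial_x^2$ on $(-\ell/2,0)$ with Neumann at $x=-\ell/2$ and Dirichlet at $x=0$ has first eigenvalue $(\pi/\ell)^2$, with eigenfunction proportional to $\cos(\pi(x+\ell/2)/\ell)$. Since $\ell\in(0,\pi/k)$, we have $k^2<(\pi/\ell)^2$, so no transverse mode propagates. Consequently $U_\infty$ admits, for $y\ge 1+\delta$, a modal expansion of the form $U_\infty(x,y)=\sum_{n\ge0}c_n\phi_n(x)e^{-\beta_n(y-1)}$, with $\beta_n\ge\beta_0:=\sqrt{(\pi/\ell)^2-k^2}$. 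In particular $U_\infty$ and $\partial_yU_\infty$ are exponentially small at $y=L$: $\|U_\infty(\cdot,L)\|_{\mrm{H}^{1/2}(\Gamma_L)}+\|\partial_yU_\infty(\cdot,L)\|_{\mrm{H}^{-1/2}(\Gamma_L)}\le C\,e^{-\beta_0 L}$.

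Take $U_\infty|_{\om_L}$ as an approximation of $U$. It satisfies the Helmholtz equation and the mixed boundary conditions on $\partial\om_L\cap\partial\om_\infty$ exactly, and has the correct outgoing behaviour at $x\to-\infty$ with reflection coefficient $R_\infty$. The only defect is on $\Gamma_L$, where $\partial_nU_\infty=\partial_yU_\infty|_{y=L}$ does not vanish. Then $W_L:=U-U_\infty$ (viewed on $\om_L$) solves the homogeneous Helmholtz equation with the correct boundary conditions on $\partial\om_L\cap\partial\om_\infty$, the datum $\partial_nW_L=-\partial_yU_\infty|_{y=L}$ on $\Gamma_L$, and is outgoing at $x\to-\infty$ with scattering coefficient $\rD-R_\infty$. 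The desired estimate will follow from a stability bound of the form
\[
|\rD-R_\infty|+\|\tilde W_L\|_{\mrm{H}^1(\om_L)}\le C\,\|\partial_yU_\infty(\cdot,L)\|_{\mrm{H}^{-1/2}(\Gamma_L)},
\]
with $C$ independent of $L$ for $L$ large enough and $\tilde W_L:=W_L-\chi^-(\rD-R_\infty)w^-$.

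The main obstacle is the uniform-in-$L$ character of this stability estimate. The absence of trapped modes in $\om_\infty$ makes the limit problem Fredholm of index zero on weighted Sobolev spaces with detached asymptotics (in the spirit of \cite{NaPl94,Naza13}), and I would transfer this to the truncated problems by the usual contradiction-plus-compactness argument: if the stability constants blew up along a sequence $L_n\to+\infty$, normalising the corresponding solutions $W_n$ and exploiting local compactness of $\mH^1$ one would extract a weak limit in $\mH^1_{\loc}(\om_\infty)$ solving the homogeneous limit problem; the exponential decay of the outgoing remainders together with the vanishing data forces this limit to lie in $\mH^1(\om_\infty)$, hence to be zero by assumption, contradicting the unit normalisation. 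Combining this stability estimate with the exponential bound of Step 2 yields exactly $|\rD-R_\infty|\le C\,e^{-\beta_0 L}$.
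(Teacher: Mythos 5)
Your proposal is correct and follows essentially the same route as the paper: approximate $U$ by $U_\infty$ restricted to $\om_L$, observe that the only residual is the Neumann mismatch $\partial_y U_\infty$ on $\Gamma_L$, which is $O(e^{-\sqrt{(\pi/\ell)^2-k^2}\,L})$ thanks to the purely evanescent modal expansion in the vertical branch (the transverse eigenvalues $((2n+1)\pi/\ell)^2$ all exceed $k^2$ when $\ell<\pi/k$), and then absorb this residual through a stability estimate uniform in $L$. The only point where you diverge is the justification of that uniform stability bound --- the paper reduces to the bounded domain $D_L$ via a Dirichlet-to-Neumann map on $\Upsilon$ and invokes \cite[Chap.~5, \S5.6, Thm.~5.6.3]{MaNP00} for the uniform invertibility of the resulting operator $\mathscr{A}(L)$, whereas you sketch the standard compactness-contradiction argument (where, to conclude that the weak limit lies in $\mathrm{H}^1(\om_\infty)$, you would still need to kill the propagating outgoing component at $x\to-\infty$ via the energy flux identity) --- but both are legitimate ways to obtain the same key lemma.
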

\begin{proof}
Set $\xi:=2\max(\ell,d)$ and define the segment $\Upsilon:=\{-\xi\}\times(0;1)$. Note that the domains $\om_L$, $\om_\infty$ coincide with the strip $\R\times(0;1)$ for $x\le-\xi$. From the expansions (\ref{defZetaLanti}) and (\ref{ScatteringDemiMixed}) for $U$ and $U_{\infty}$, using decomposition in Fourier series, one finds 
\begin{equation}\label{CalculusDiffCoef}
\rD - R_{\infty}=2k\int_{\Upsilon}(U-U_{\infty})\,w^+\,dy.
\end{equation}
Define the domain $D_L:=\{(x,y)\in\om_L\,|\,x\ge-\xi\}$. From the continuity of the trace operator from $\mH^1(D_L)$ to $\mL^2(\Upsilon)$ (with a constant of continuity independent of $L$), together with (\ref{CalculusDiffCoef}), we see that to establish (\ref{MainEstimationCoef}), it is sufficient to show the estimate
\begin{equation}\label{VolumicEstimate}
\|U - U_{\infty} \|_{\mH^1(D_L)} \le C\,e^{-\sqrt{(\pi/\ell)^2-k^2} L}.
\end{equation}
Here and in what follows $C>0$ is a constant which can change from one line to another but which is independent of $L$. Now we focus our attention on the proof of (\ref{VolumicEstimate}).\\
\newline
First we introduce some notation to reformulate Problem (\ref{PbChampTotalAntiSym}) in the bounded domain $D_L$. Introduce the Dirichlet-to-Neumann map 
\[
\begin{array}{lccl}
\Lambda:&\mH^{1/2}(\Upsilon)&\to&\mH^{-1/2}(\Upsilon)\\
 &\psi & \mapsto & \Lambda(\psi)=-\partial_x W|_{\Upsilon},
\end{array}
\]
where $W$ is the unique function satisfying 
\[
\begin{array}{|rcll}
\Delta W + k^2 W & = & 0 & \mbox{ in }(-\infty;-\xi)\times(0;1)\\[3pt]
 \partial_yW  & = & 0  & \mbox{ on }\partial((-\infty;-\xi)\times(0;1))\setminus\Upsilon \\[3pt]
W  & = & \psi  & \mbox{ on }\Upsilon
\end{array}
\]
and admitting the expansion $W=c\,w^-+\tilde{W}$. Here $c\in\Cplx$ and  $\tilde{W}\in\mH^1((-\infty;-\xi)\times(0;1))$. One can check that $\Lambda$ is well-defined. Moreover, it is known that $\Lambda$ is a linear and continuous map from $\mH^{1/2}(\Upsilon)$ to $\mH^{-1/2}(\Upsilon)$. Define the space $\mX:=\{V\in\mH^1(D_L)\,|\,V=0\mbox{ on }\Sigma_L\}$. If $U$ is a solution of (\ref{PbChampTotalAntiSym}) admitting expansion (\ref{defZetaLanti}), then $U$ solves the problem 
\begin{equation}\label{PbVaria}
\begin{array}{|lcl}
\mbox{Find $U\in\mX$ such that}\\
a(U,V)=-2ik\dsp\int_{\Upsilon}w^{+}\overline{V}\,dy,\qquad \forall V\in\mX,
\end{array}
\end{equation}
with $a(U,V)=\int_{D_L}\nabla U\cdot\overline{\nabla V}-k^2U\,\overline{V}\,dxdy-\langle \Lambda(U),\overline{V}\rangle_{\Upsilon}$. Here $\langle \cdot,\cdot\rangle_{\Upsilon}$ stands for the (bilinear) duality pairing between $\mH^{-1/2}(\Upsilon)$ and $\mH^{1/2}(\Upsilon)$. Conversely, if $U$ satisfies (\ref{PbVaria}), one can extend $U$ as a solution of (\ref{PbChampTotalAntiSym}) admitting expansion (\ref{defZetaLanti}). With the Riesz representation theorem, introduce the bounded operator $\mA(L):\mX\to\mX$ such that 
\[
(\mA(L)U,V)_{\mH^1(D_L)}=a(U,V),
\]
where $(\cdot,\cdot)_{\mH^1(D_L)}$ denotes the usual inner product of $\mH^1(D_L)$. Under the assumption that the only solution of Problem (\ref{PbUnbounded}) in $\mH^1(\om_{\infty})$ is zero (absence of trapped modes), one can show that $\mA(L)$ is invertible for $L$ large enough. Moreover there is a constant $C>0$ independent of $L$ large enough such that 
\begin{equation}\label{StabilityEstimate}
\|\mA(L)^{-1}\| \le C.
\end{equation}
Here $\|\cdot\|$ stands for the usual norm on the set of linear operators of $\mX$. For the proof of this non trivial stability estimate, we refer the reader to \cite[Chap. 5, \S5.6, Thm. 5.6.3]{MaNP00}.\\ 
\newline
We come back to the proof of (\ref{VolumicEstimate}). For $L$ large enough, using (\ref{StabilityEstimate}), we can write 
\begin{equation}\label{sequenceEstimates0}
 \|U - U_{\infty} \|_{\mH^1(D_L)} = \| \mA(L)^{-1}(\mA(L)(U - U_{\infty})) \|_{\mH^1(D_L)} \le  C\,\|\mA(L)(U - U_{\infty}) \|_{\mH^1(D_L)}.
\end{equation}
Observing that 
\[
(\mA(L)U_{\infty},V)_{\mH^1(D_L)}=a(U_{\infty},V)=-2ik\dsp\int_{\Upsilon}w^{+}\overline{V}\,dy+\int_{\Gamma_L}\partial_yU_{\infty}\,\overline{V}\,dx
\]
(we remind the reader that $\Gamma_L=(-\ell/2;0)\times\{L\}$), we deduce that 
\begin{equation}\label{sequenceEstimates1}
\|U - U_{\infty} \|_{\mH^1(D_L)} \le  C\,\sup_{V\in\mX\setminus\{0\}}\Big|\int_{\Gamma_L}\partial_yU_{\infty}\,\overline{V}\,dx\Big|/\|V\|_{\mH^1(D_L)}.
\end{equation}
Now we assess the right hand side of (\ref{sequenceEstimates1}). To proceed, again we need to introduce some notation. For $n\in\N:=\{0,1,\dots\}$, define the  function $\varphi_n$ such that $\varphi_n(x)=2\,\ell^{-1/2}\sin(\pi(1+2n)x/\ell)$. The family $(\varphi_n)$ is an orthonormal basis of $\mL^2(\Gamma_L)$. Set $\lambda_n=\pi(1+2n)/\ell$. Using the equation satisfied by $U_{\infty}$, we obtain the decomposition,  for $y\ge d$, 
\begin{equation}\label{ExplicitExpansion}
U_{\infty}(x,y)=\sum_{n=0}^{\infty}\alpha_n\,e^{-\beta_n y}\varphi_n(x)\  \mbox{ with }\ \beta_n:=\sqrt{\lambda^2_n-k^2}\ \mbox{ and }\   \alpha_n=e^{\beta_n d}\int_{\Gamma_L}U_{\infty}\,\varphi_n \,dx.
\end{equation}
On the other hand, classical results guarantee that for $V\in\mX$, the norm $\|V\|_{\mH^{1/2}(\Gamma_L)}$ is equivalent to
\begin{equation}\label{NormEquiv}
\left(\sum_{n=0}^{+\infty}(1+|\lambda_n|^{1/2})^2\,|a_n|^2\right)^{1/2}
\end{equation}
with $a_n:=\dsp\int_{\Gamma_L} V\,\varphi_n\,dx$. This allows us to write
\[
\Big|\int_{\Gamma_L}\partial_yU_{\infty}\,\overline{V}\,dx\Big| \le C\,\|U_{\infty}\|_{\mH^{1/2}(\Gamma_L)}\,\|V\|_{\mH^{1/2}(\Gamma_L)}\le C\,\|U_{\infty}\|_{\mH^{1/2}(\Gamma_L)}\,\|V\|_{\mH^1(D_L)}.
\]
Plugging the latter estimate in (\ref{sequenceEstimates1}), we deduce 
\begin{equation}\label{sequenceEstimates}
\|U - U_{\infty} \|_{\mH^1(D_L)} \le  C\,\|U_{\infty}\|_{\mH^{1/2}(\Gamma_L)}.
\end{equation}
Working on the expansion (\ref{ExplicitExpansion}) with formula (\ref{NormEquiv}), we can show that
\begin{equation}\label{EstimateTraces}
\|U_{\infty}\|_{\mH^{1/2}(\Gamma_L)}\le e^{\beta_0(d-L)}\|U_{\infty}\|_{\mH^{1/2}(\Gamma_d)}. 
\end{equation}
Since $\beta_0=\sqrt{(\pi/\ell)^2-k^2}$, plugging (\ref{EstimateTraces}) in (\ref{sequenceEstimates}), finally we find 
\[
\|U - U_{\infty} \|_{\mH^1(D_L)} \le  C\,e^{\sqrt{(\pi/\ell)^2-k^2}(d-L)}\|U_{\infty}\|_{\mH^{1/2}(\Gamma_d)},
\]
which is nothing but estimate (\ref{VolumicEstimate}).
\end{proof}
\begin{proposition}\label{ProofUnitary}
The scattering matrix $\mathfrak{s}_{\infty}$ defined in (\ref{UnboundedScatteringMatrix}) is unitary and symmetric. 
\end{proposition}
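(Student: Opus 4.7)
The plan is to establish both properties via two applications of Green's identity on the truncated domain
\[
\om_\infty^{\xi,\eta} := \{(x,y)\in\om_\infty\,|\,x>-\xi,\ y<\eta\},
\]
bounded by the cross-sections $\Upsilon_\xi := \{-\xi\}\times(0,1)$ and $\Gamma_\eta := (-\ell/2,0)\times\{\eta\}$ for large $\xi,\eta$. The Neumann condition on $\partial\om_\infty\cap\partial\om_\infty^{\xi,\eta}$ makes all boundary contributions localize on these two segments, where the expansions \eqref{decompoUnbounded} apply up to exponentially decaying remainders.

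For the symmetry $\mathfrak{s}_\infty=\mathfrak{s}_\infty^\top$ (equivalent to $t_\infty=t^\circ_\infty$), I would plug $u^-_\infty$ and $u^\circ_\infty$ into Green's identity \emph{without} complex conjugation,
\[
0 \;=\; \int_{\om_\infty^{\xi,\eta}}\bigl(u^-_\infty\Delta u^\circ_\infty - u^\circ_\infty\Delta u^-_\infty\bigr)\,dxdy \;=\; \int_{\Upsilon_\xi\cup\Gamma_\eta}\bigl(u^-_\infty\partial_n u^\circ_\infty - u^\circ_\infty\partial_n u^-_\infty\bigr)\,d\sigma,
\]
and substitute the propagating parts of each expansion. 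Letting $\xi,\eta\to+\infty$, the evanescent remainders drop out, and the oscillating cross-terms at each cross-section cancel pairwise (since both components of the integrand share the same wavenumber along the corresponding lead axis). What is left is an identity of the form $C\,(t_\infty-t^\circ_\infty)=0$ with $C\neq 0$, yielding $t_\infty=t^\circ_\infty$.

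For unitarity I would apply the \emph{conjugate} identity
\[
\int_{\Upsilon_\xi\cup\Gamma_\eta}\bigl(\partial_n u\,\overline{u} - u\,\partial_n\overline{u}\bigr)\,d\sigma = 0
\]
to the general linear combination $u := \alpha\,u^-_\infty + \beta\,u^\circ_\infty$, for arbitrary $(\alpha,\beta)\in\Cplx^2$. As in the derivation of \eqref{NRJHalfguide}, the orthogonality of $w^+$ with $w^-$ on a fixed cross-section converts each cross-section integral into the difference between the squared moduli of outgoing and incoming propagating amplitudes in that lead. Since the incoming amplitudes are $\alpha$ (horizontal) and $\beta$ (vertical) while the outgoing ones are $\alpha r_\infty+\beta t^\circ_\infty$ and $\alpha t_\infty+\beta r^\circ_\infty$, the limit $\xi,\eta\to+\infty$ produces the balance
\[
|\alpha|^2+|\beta|^2 \;=\; |\alpha r_\infty+\beta t^\circ_\infty|^2+|\alpha t_\infty+\beta r^\circ_\infty|^2,
\]
valid for all $(\alpha,\beta)\in\Cplx^2$, which is precisely the unitarity of $\mathfrak{s}_\infty$.

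The main technical point to pin down is that the evanescent remainders $\tilde u^-_\infty$ and $\tilde u^\circ_\infty$ together with their normal derivatives contribute nothing in the limit. As already exploited after \eqref{integral}, applying Green's identity to a thin slab between two parallel cross-sections shows that each of the two surface integrals is actually independent of $\xi$ (respectively $\eta$), so one may compute it once and for all on the propagating part, pushing the remainders arbitrarily far into the cylindrical ends where they decay exponentially. The remaining work is routine bookkeeping of the flux normalizations built into the definitions of $w^\pm$ and $w^\pm_\circ$.
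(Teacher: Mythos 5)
Your proposal is correct and follows essentially the same route as the paper: the paper's proof also rests on the vanishing of the boundary Wronskian form $q(\varphi,\psi)=\int_{\Sigma}\partial_n\varphi\,\overline{\psi}-\varphi\,\partial_n\overline{\psi}\,d\sigma$ on a truncated domain, evaluated on $u^-_{\infty}$, $u^{\circ}_{\infty}$ via their modal expansions, with the conjugated version giving unitarity and the unconjugated version (your ``Green's identity without complex conjugation'', the paper's $q(u^-_{\infty},\overline{u^{\circ}_{\infty}})=0$) giving $t_{\infty}=t^{\circ}_{\infty}$. Your only cosmetic deviation is deriving unitarity from the flux balance for a general combination $\alpha u^-_{\infty}+\beta u^{\circ}_{\infty}$ and polarizing, rather than computing the four entries $q(u^i_{\infty},u^j_{\infty})$ individually; note only that the cancellation of the $w^+$/$w^-$ cross-terms comes from the Wronskian structure (symplectic orthogonality), not from $\mL^2$-orthogonality of $w^+$ and $w^-$ on a cross-section.
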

\begin{proof}
Define the symplectic (sesquilinear and anti-hermitian ($q(\varphi,\psi)=-\overline{q(\psi,\varphi)}$)) form $q(\cdot,\cdot)$ such that for all $\varphi,\psi\in\mH^1_{\loc}(\om_\infty)$
\[
q(\varphi,\psi)=\int_{\Sigma} \cfrac{\partial \varphi}{\partial n}\,\overline{\psi}-\varphi\cfrac{\partial \overline{\psi}}{\partial n}\,d\sigma.
\]
Here $\Sigma:=\{-\xi\}\times(0;1)\cup (-\ell;0)\times\{\xi\}$, $\partial_n=-\partial_x$ on $\{-\xi\}\times(0;1)$, $\partial_n=\partial_y$ on $(-\ell;0)\times\{\xi\}$ and $\xi$ is a given parameter chosen large enough. Moreover, $\mH^1_{\loc}(\om_\infty)$ refers to the Sobolev space of functions $\varphi$ such that $\varphi|_{\mathcal{O}}\in\mH^1(\mathcal{O})$ for all bounded domains $\mathcal{O}\subset\om_\infty$. Integrating by parts and using that the functions $u^-_{\infty}$, $u^\circ_{\infty}$ defined in (\ref{decompoUnbounded}) satisfy the Helmholtz equation, we obtain $q(u_{\infty}^i,u_{\infty}^j)=0$ for $i,\,j\in\{-,\circ\}$. On the other hand, decomposing $u_{\infty}^-$, $u_{\infty}^\circ$ in Fourier series on $\Sigma$, we find 
\[
\begin{array}{c}
q(u_{\infty}^-,u_{\infty}^-) = (-1+|r_{\infty}|^2+|t_{\infty}|^2)\,i,\quad q(u_{\infty}^\circ,u_{\infty}^\circ) = (-1+|r^{\circ}_{\infty}|^2+|t^{\circ}_{\infty}|^2)\,i\\[5pt]
q(u_{\infty}^-,u_{\infty}^\circ)=-\overline{q(u_{\infty}^\circ,u_{\infty}^-)}=r_{\infty}\overline{t^{\circ}_{\infty}}+t_{\infty}\overline{r_{\infty}^{\circ}}.
\end{array}
\]
These relations allow us to prove that $\mathfrak{s}_{\infty}\,\overline{\mathfrak{s}_{\infty}}^{\top}=\mrm{Id}_{2\times 2}$, that is to conclude that $\mathfrak{s}_{\infty}$ is unitary. On the other hand, one finds $q(u_{\infty}^-,\overline{u_{\infty}^\circ})=0=-t_{\infty}^{\circ}+t_{\infty}$. We deduce that $\mathfrak{s}_{\infty}$ is symmetric.
\end{proof}

\section*{Acknowledgments}
The research of S.A. N. was supported by the grant No. 18-01-00325 of the Russian Foundation on Basic Research. V. P. acknowledges the financial support of the Agence Nationale de la Recherche through the Grant No. DYNAMONDE ANR-12-BS09-0027-01.

\bibliography{Bibli}
\bibliographystyle{plain}

\end{document}